\documentclass[reqno,english]{amsart}
\usepackage{amsfonts,amsmath,latexsym,verbatim,amscd,mathrsfs,color,array}

\usepackage{amsmath,amssymb,amsthm,amsfonts,graphicx,color}
\usepackage{amssymb}
\usepackage{pdfsync}
\usepackage{epstopdf}
\usepackage[colorlinks=true]{hyperref}

\makeatletter
\def\@currentlabel{2.1}\label{e:dispaa}
\def\@currentlabel{2.21}\label{e:dispau}
\def\@currentlabel{2.22}\label{e:dispav}
\def\@currentlabel{2.23}\label{e:dispaw}
\def\@currentlabel{2.24}\label{e:dispax}
\def\theequation{\thesection.\@arabic\c@equation}
\makeatother

\oddsidemargin 0.25in

\evensidemargin 0 truecm

\marginparsep 0pt

\topmargin -0.1in

\textheight 22.0 truecm

\textwidth 15 truecm

\renewcommand{\theequation}{\thesection.\arabic{equation}}
\newtheorem{lemma}{Lemma}[section]

\newtheorem{proposition}{Proposition}[section]
\newtheorem{corollary}{Corollary}[section]
\newtheorem{remark}{Remark}[section]
\newcommand{\bremark}{\begin{remark} \em}
\newcommand{\eremark}{\end{remark} }

\newtheorem{theorem}{Theorem}[section]

\newcommand{\R}{{\mathbb R}}

\newcommand{\BE}{\begin{equation}}
\newcommand{\BEN}{\begin{equation*}}
\newcommand{\EE}{\end{equation}}
\newcommand{\EEN}{\end{equation*}}
\newcommand{\BL}{\begin{lemma}}
\newcommand{\EL}{\end{lemma}}
\newcommand{\BT}{\begin{theorem}}
\newcommand{\ET}{\end{theorem}}
\newcommand{\BP}{\begin{proposition}}
\newcommand{\EP}{\end{proposition}}
\newcommand{\BC}{\begin{corollary}}
\newcommand{\EC}{\end{corollary}}

\begin{document}

\title[Superfluids Passing an Obstacle]{Superfluids  Passing an Obstacle and Vortex Nucleation}

\author{Fanghua Lin}
\address{ Courant Institute of Mathematical Sciences, 251 Mercer Street, New York, NY. 10012}
\email{linf@cims.nyu.edu}

\author{Juncheng Wei}
\address{\noindent J. Wei - Department of Mathematics, University of British Columbia, Vancouver BC V6T 1Z2, Canada.}
\email{wei@math.cuhk.edu.hk}

\keywords{Traveling Waves, Gross-Pitaevskii Equations,  Vortices, Singular  Perturbation}
\subjclass{ 35J25, 35B25, 35B40, 35Q35}

\date{\today}\maketitle

\begin{abstract}
We consider a superfluid described by the Gross-Pitaevskii equation passing an obstacle
\[\epsilon^2 \Delta u+ u(1-|u|^2)=0 \ \mbox{in} \ \R^d \backslash \Omega, \ \  \frac{\partial u}{\partial \nu}=0 \ \mbox{on}\ \partial \Omega
\]
where $ \Omega$ is a smooth bounded domain in $ \R^d$ ($d\geq 2$), which is referred as the
obstacle and $ \epsilon>0$ is sufficiently small. We first construct a vortex free solution  of the form $ u= \rho_\epsilon (x)
e^{i \frac{\Phi_\epsilon}{\epsilon}}$ with $ \rho_\epsilon (x) \to 1-|\nabla \Phi^\delta(x)|^2,
\Phi_\epsilon (x) \to \Phi^\delta (x) $ where $\Phi^\delta (x)$ is the unique solution for the subsonic irrotational flow equation
\[ \nabla ( (1-|\nabla \Phi|^2)\nabla \Phi )=0 \ \mbox{in} \ \R^d \backslash \Omega, \ \frac{\partial
\Phi}{\partial \nu} =0 \ \mbox{on} \ \partial \Omega, \ \nabla \Phi (x) \to \delta \vec{e}_d \ \mbox{as} \ |x| \to +\infty \]
and $|\delta | <\delta_{*}$ (the sound speed).

In dimension $d=2$, on the background of this vortex free solution we also construct solutions with
single vortex close to the maximum or minimum points of the function $|\nabla \Phi^\delta
(x)|^2$ (which are on the boundary of the obstacle). The latter verifies the vortex nucleation phenomena
(for the steady states) in superfluids described by the Gross-Pitaevskii
equations. Moreover, after
some proper scalings, the limits of these vortex solutions are traveling wave solution  of the
Gross-Pitaevskii equation. These results also show rigorously the conclusions drawn from the
numerical computations in \cite{huepe1, huepe2}.

Extensions to Dirichlet boundary conditions, which may be more consistent with the situation in the
physical experiments and numerical simulations (see \cite{ADP} and references therein) for the trapped
Bose-Einstein condensates, are also discussed.

\end{abstract}

\setcounter{equation}{0}
\section{Introduction}

This paper addresses the vortex nucleation and vortex shedding phenomena when a superfluid passes an
obstacle. Of particular concern is the existence of associated steady
state solutions of the following Gross-Pitaevskii equation passing an
obstacle
\begin{equation}
\label{1.1}
\left\{\begin{array}{l}
\epsilon^2 \Delta u+ u(1-|u|^2)=0 \ \mbox{in} \ \R^d  \backslash \Omega, \\
 \ \frac{\partial u}{\partial \nu}=0 \ \mbox{on}\ \partial \Omega
\end{array}
\right.
\end{equation}
where $\epsilon>0$ is a sufficiently small constant, $\Omega$ is a smooth and bounded domain in $\R^d, \ d\geq 2$ and  $\nu$ denotes the unit outer normal.

\medskip

Equation (\ref{1.1}), defined in the whole space, or in a bounded domain
or in an exterior of a bounded domain,
arises in many physical problems. The solutions are often used to describe stationary flows for
superfluids, \cite{ADP, fpr,  jp, jpr, LL, lz-arma, pnb, R}, the traped Bose-Einstein condensates and
phenomena in nonlinear optics, \cite{GR, huepe1, huepe2, JR1, JR2}.

\medskip

The purpose of this paper is to construct {\em two} types of solutions to (\ref{1.1}). This will be
achieved  by perturbation of {\em two} basic solution profiles. The first basic
profile, which is a relatively clear one,  is
obtained through the so-called  Madelung transformation,
\begin{equation}
u=\rho e^{i \frac{\Phi}{\epsilon}}.
\end{equation}
In other words, one is interested in solutions in the semiclassical regime (i.e. $\epsilon$ being sufficiently small), see \cite{lz-arma} and
references therein for discussions in evolutionary cases.
Equation (\ref{1.1}) then  becomes
\begin{equation}
\label{1.2}
\left\{\begin{array}{l}
\epsilon^2 \Delta \rho+ \rho(1-\rho^2 -|\nabla \Phi |^2)=0 \ \mbox{in} \ \R^d \backslash \Omega, \\
\nabla (\rho^2 \nabla \Phi)= 0\ \mbox{in} \ \  \R^d \backslash \Omega,
\\
 \ \frac{\partial \rho}{\partial \nu}= \frac{\partial \Phi}{\partial \nu}=0 \  \ \mbox{on}\ \partial \Omega.
\end{array}
\right.
\end{equation}
Formally if we set $\epsilon=0$ and neglect for the moment the boundary condition for $\rho$, (\ref{1.2}) becomes
the standard irrotational flow equation passing through an obstacle
\begin{equation}
\label{1.3}
\left\{\begin{array}{l}
\nabla (\rho^2 \nabla \Phi)= 0\ \mbox{in} \ \R^d \backslash \Omega,
\\
\rho^2=1-|\nabla \Phi |^2, \\
 \frac{\partial \Phi}{\partial \nu}=0\   \mbox{on}\ \partial \Omega,\\
 \nabla \Phi (x) \to (0, 0,..., \delta):=\delta\vec{e}_d \ \mbox{as} \ |x| \to +\infty.
\end{array}
\right.
\end{equation}

There are classical works by L. Bers  \cite{bers1, bers2} in the two
dimensional case, R.Finn-Gilbarg \cite{fg} and G. Dong \cite{do} in higher
dimensional cases. We summarize the basic results in the following theorem.

\begin{theorem}
\label{t0}

\noindent
(i) There exists a $\delta_{*} \in (0,1)$ such that for $|\delta | < \delta_{*}$, there
exists a unique classical solution $\Phi= \Phi^\delta$ (steady state solution of (\ref{1.3})). For $|\delta |>\delta_{*}$, there are no classical solutions (the so-called shocks develop). (ii) The solution $\Phi^\delta $ has the property that  for  $|\delta |<\delta_{*}$
\begin{equation}
\label{phi1}
\max_{x \in \R^d \backslash \Omega}  |\nabla \Phi^\delta (x)|<\frac{1}{3}.
 \end{equation}

\end{theorem}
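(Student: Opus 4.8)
The plan is to read \eqref{1.3} as a quasilinear elliptic boundary value problem for the velocity potential $\Phi$, obtaining (i) by a continuation argument in the far-field speed $\delta$ anchored at $\delta=0$ and (ii) by a maximum principle for the flow speed. First I would put the potential equation in nondivergence form: expanding $\nabla\cdot((1-|\nabla\Phi|^2)\nabla\Phi)=0$ gives
\[
(1-|\nabla\Phi|^2)\,\Delta\Phi-2\sum_{i,j}\Phi_i\Phi_j\,\Phi_{ij}=0,
\]
so the coefficient matrix $a_{ij}(\nabla\Phi)=(1-|\nabla\Phi|^2)\delta_{ij}-2\Phi_i\Phi_j$ has eigenvalue $1-|\nabla\Phi|^2$ transverse to the flow (multiplicity $d-1$) and $1-3|\nabla\Phi|^2$ along it. Hence the equation is uniformly elliptic exactly on the subsonic set $\{|\nabla\Phi|^2<1/3\}$, the sonic speed being $1/\sqrt3$; this is the origin of \eqref{phi1}. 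Equivalently the mass flux $m(q)=(1-q^2)q$ is strictly increasing for $q<1/\sqrt3$, so on subsonic gradients \eqref{1.3} is the Euler--Lagrange equation of a strictly convex functional, which will yield uniqueness. At $\delta=0$ the problem reduces to the exterior Neumann problem $\Delta\Phi=0$ in $\R^d\setminus\Omega$, $\partial_\nu\Phi=0$ on $\partial\Omega$, $\nabla\Phi\to0$, solved uniquely by $\Phi\equiv0$.

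For (i) I would run the continuity method in $\delta$. Let $S$ be the set of $\delta\ge0$ for which \eqref{1.3} admits a smooth, uniformly subsonic solution $\Phi^\delta$ with $\sup|\nabla\Phi^\delta|^2<1/3$. This set is nonempty near $0$ and relatively open: linearizing at a uniformly subsonic solution yields a uniformly elliptic exterior Neumann operator $\psi\mapsto\nabla\cdot(A\nabla\psi)$ with $A=(a_{ij}(\nabla\Phi^\delta))$, which is invertible on the appropriate weighted H\"older spaces, its kernel being trivial by uniqueness for the exterior Neumann problem with decay, so the implicit function theorem continues the branch. Closedness rests on uniform Schauder estimates, which persist as long as ellipticity does not degenerate. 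Setting $\delta_*=\sup S$, one obtains a smooth subsonic solution for $|\delta|<\delta_*$, while for $|\delta|>\delta_*$ the a priori elliptic control is lost and no classical solution survives (shock formation). Uniqueness follows from the strict convexity above, or equivalently a comparison principle for the difference of two subsonic solutions. The detailed realization is in Bers \cite{bers1,bers2} for $d=2$ and in Finn--Gilbarg \cite{fg} and Dong \cite{do} for $d\ge3$.

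For (ii) the point is a maximum principle for the speed $q=|\nabla\Phi|$. Differentiating the equation in each $x_k$ shows that every derivative $\Phi_k$ solves a linear uniformly elliptic equation on the subsonic set; combining these, $q^2$ (or a suitable increasing function of it) is a subsolution of a linear elliptic operator and so attains its maximum only on $\partial\Omega$ or at infinity. Since $q\to\delta$ at infinity and the subsonic condition is preserved along the branch up to $\delta_*$, this localizes and controls $\max_{\R^d\setminus\Omega}|\nabla\Phi^\delta|$ strictly below the sonic value, yielding the bound \eqref{phi1}, with the explicit constant there coming from the refined boundary gradient estimates of the cited works.

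The main obstacle is precisely this a priori subsonic estimate, which feeds both parts: one must show that along the continuation the maximal speed cannot reach the sonic value prematurely, and establish the speed maximum principle despite the quasilinear coupling and the Neumann condition on $\partial\Omega$, where the velocity is purely tangential. In $d=2$ this is handled cleanly by Bers' hodograph and stream-function technique, which linearizes the problem exactly; in higher dimensions it requires the more delicate interior and boundary gradient bounds of Finn--Gilbarg and Dong, and that is where the bulk of the classical effort lies.
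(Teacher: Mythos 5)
The paper does not actually prove Theorem \ref{t0}: it is stated as a summary of classical results, and the ``proof'' consists of the citations to Bers \cite{bers1, bers2} for $d=2$ and Finn--Gilbarg \cite{fg} and Dong \cite{do} for $d\geq 3$ --- precisely the references you invoke. Your sketch (nondivergence form with coefficient matrix $(1-|\nabla\Phi|^2)\delta_{ij}-2\Phi_i\Phi_j$, eigenvalues $1-|\nabla\Phi|^2$ and $1-3|\nabla\Phi|^2$, ellipticity exactly on $\{|\nabla\Phi|^2<1/3\}$, continuity method in $\delta$ anchored at the trivial solution, uniqueness from monotonicity of $p\mapsto(1-|p|^2)p$ on the subsonic range, and a maximum principle for the speed) is a faithful outline of how those classical papers proceed, so relative to this paper you have supplied more, not less.

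Two points deserve flagging. First, the constant in (\ref{phi1}): your ellipticity computation yields $|\nabla\Phi^\delta|^2<\frac13$, i.e.\ speed below the sonic value $1/\sqrt3$, not $|\nabla\Phi^\delta|<\frac13$ as (\ref{phi1}) literally reads; and no ``refined boundary gradient estimate'' can deliver the latter uniformly in $\delta<\delta_*$, since classically $\max|\nabla\Phi^\delta|$ increases to the sonic value $1/\sqrt3>1/3$ as $\delta\uparrow\delta_*$. The printed (\ref{phi1}) is evidently a typo for the squared bound: the paper itself later asserts (\ref{phi1}) is equivalent to $0<c<\sqrt2$ with $c=2|\nabla\Phi^\delta(x_0)|/\sqrt{1-|\nabla\Phi^\delta(x_0)|^2}$, which holds iff $|\nabla\Phi^\delta(x_0)|^2<\frac13$. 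So your argument proves the intended statement; you should simply drop the appeal to a refinement rather than gesture at one that cannot exist. Second, your nonexistence claim for $\delta>\delta_*$ is soft: ``a priori elliptic control is lost'' does not by itself preclude classical solutions. With $\delta_*=\sup S$ you additionally need $S$ to be an interval (monotonicity of existence in $\delta$) and, strictly speaking, the exclusion of smooth transonic flows, which is where the genuinely delicate part of the Bers and Dong analyses lies. Relatedly, in $d=2$ the convexity/monotonicity uniqueness argument has a borderline integrability issue at infinity, since $\nabla\Phi^1-\nabla\Phi^2={\mathcal O}(|x|^{-1})$ is only marginally square-integrable; Bers circumvents this via the hodograph and stream-function technique you mention. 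Since you defer exactly these points to the same classical works the paper cites, your proposal is acceptable as a summary and is, in substance, the proof the paper omits.
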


In the above theorem, $\delta_{*}$ is called the {\em sound speed } of this problem. Thus for classical
fluids passing an obstacle, the situation is relatively clear. For a fluid with a speed less than
the sound speed, there is a smooth stationary flow. When the speed of fluids goes beyond a critical
(sound) speed, there are no smooth stationary flows (shock develops). From the
semiclassical limit formalism
(see \cite{lz-arma} and the references therein), one would expect a similar conclusion may be also true for
superfluids passing an obstacle. Since superfluids are frictionless, there are no notions of
"shock" in this case. Instead, there would be nucleation of vortices and the latter would introduce a
dissipation mechanism that eventually destroy the superfluidity, see for examples \cite{LL, fpr, jp,
jpr, pnb, R, JR1, JR2}. However, there is no rigorous mathematical proof. Throughout this paper, we
always assume that

\begin{equation}
\label{1.4}
|\delta |<\delta_{*}.
\end{equation}

For superfluids passing an obstacle described by the equation (\ref{1.1}), our first result concerns
vortex free solutions in the "subsonic" case. It can be considered as perturbation from the solutions
of (\ref{1.3}). See also \cite{lz-arma} for rigorous verification in the evolutionary case.

\begin{theorem}
\label{t1}
Let $|\delta |<\delta_{*}$ be fixed and $d=2$ or $3$. Then there exists $\epsilon_0>0$ (which may depend on $\delta_{*}-|\delta|$) such that for $0<\epsilon <\epsilon_0$  problem (\ref{1.1}) has a smooth solution of the form $ u_\epsilon (x)= \rho_\epsilon (x) e^{i\frac{\Phi_\epsilon}{\epsilon}}$ such that as $\epsilon \to 0$,
$\nabla \Phi_\epsilon (x) \to \nabla \Phi^\delta (x), \rho_\epsilon (x) \to \rho^\delta (x):=1-|\nabla \Phi^\delta (x)|^2$, uniformly in $ \R^d \backslash \Omega$,  where $\Phi^\delta$ is the solution given by Theorem \ref{t0}.
\end{theorem}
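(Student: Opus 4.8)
\emph{Strategy.} The plan is to view the Madelung system (\ref{1.2}) as a singular perturbation of the subsonic flow system (\ref{1.3}) and to produce $(\rho_\epsilon,\Phi_\epsilon)$ by perturbing the profile $(\rho^\delta,\Phi^\delta)$ furnished by Theorem \ref{t0}. Two facts from that theorem are the backbone of the argument. First, the limiting density obtained by setting $\epsilon=0$ in the first equation of (\ref{1.2}), namely the positive root $\rho^\delta$ of $\rho^2=1-|\nabla\Phi^\delta|^2$, is bounded \emph{away from zero} by (\ref{phi1}), so that the profile carries no vortex. Second, since $|\nabla\Phi^\delta|^2<\tfrac13$, the linearization of the quasilinear operator $Q(\Phi):=\nabla\!\cdot\!\big((1-|\nabla\Phi|^2)\nabla\Phi\big)$ at $\Phi^\delta$ has divergence-form coefficient matrix $(1-|\nabla\Phi^\delta|^2)I-2\,\nabla\Phi^\delta\!\otimes\!\nabla\Phi^\delta$, whose eigenvalues $1-|\nabla\Phi^\delta|^2$ and $1-3|\nabla\Phi^\delta|^2$ are uniformly positive. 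This uniform ellipticity, which is exactly what the subsonic threshold guarantees, is the analytic heart of the construction.

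\emph{Step 1: the $\rho$-solve and a boundary layer.} I would first solve the first equation of (\ref{1.2}) for $\rho$ as a function of a given phase $\Phi$ near $\Phi^\delta$. Writing $\rho=\rho^\delta+a$, the linearized operator in $a$ is $\epsilon^2\Delta+(1-3(\rho^\delta)^2-|\nabla\Phi^\delta|^2)=\epsilon^2\Delta-2(\rho^\delta)^2$, whose zeroth-order coefficient is strictly negative; hence it is invertible with bounds uniform in $\epsilon$, and the implicit function theorem yields a solution operator $\rho=\mathcal R_\epsilon[\Phi]$ that converges in the interior, as $\epsilon\to0$, to the positive root of $\rho^2=1-|\nabla\Phi|^2$. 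The one subtlety is the Neumann condition: $\partial_\nu\rho^\delta$ need not vanish on $\partial\Omega$, so $\mathcal R_\epsilon$ carries a boundary layer of width $O(\epsilon)$ and amplitude $O(\epsilon)$, governed near $\partial\Omega$ by the model equation $\epsilon^2\partial_{nn}a-2(\rho^\delta)^2a=0$. This layer is exponentially localized on the scale $\epsilon$, and I would track it in weighted norms.

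\emph{Steps 2--3: reduction and linear theory.} Substituting $\rho=\mathcal R_\epsilon[\Phi]$ into the continuity equation reduces (\ref{1.2}) to the single equation $\mathcal F_\epsilon[\Phi]:=\nabla\!\cdot\!\big(\mathcal R_\epsilon[\Phi]^2\,\nabla\Phi\big)=0$ on $\R^d\setminus\Omega$, with $\partial_\nu\Phi=0$ and $\nabla\Phi\to\delta\vec e_d$ at infinity. By construction $\mathcal F_0=Q$, so $\Phi^\delta$ is an exact zero of $\mathcal F_0$ and $\mathcal F_\epsilon-Q=O(\epsilon)$ (indeed $O(\epsilon^2)$ in the interior, with the $O(\epsilon)$ piece localized in the boundary layer). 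I would then solve $\mathcal F_\epsilon[\Phi^\delta+b]=0$ for small $b$ by contraction, the linear model being $D\mathcal F_0(\Phi^\delta)=DQ(\Phi^\delta)$. Invertibility of $DQ(\Phi^\delta)$ on the unbounded domain is established in weighted Hölder or Sobolev spaces encoding the far-field decay of the perturbed potential flow (this is where $d=2,3$ enters): uniform ellipticity gives the interior and Fredholm estimates, while triviality of the kernel follows from the maximum principle together with the uniqueness in Theorem \ref{t0}. Granting this, the smallness $\mathcal F_\epsilon-Q=O(\epsilon)$ and quadratic control of the nonlinearity close the contraction, producing $\Phi_\epsilon=\Phi^\delta+b_\epsilon$ and $\rho_\epsilon=\mathcal R_\epsilon[\Phi_\epsilon]$; undoing the Madelung transform gives the smooth $u_\epsilon=\rho_\epsilon e^{i\Phi_\epsilon/\epsilon}$ with the asserted convergence $\nabla\Phi_\epsilon\to\nabla\Phi^\delta$, $\rho_\epsilon\to\rho^\delta$.

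\emph{Main difficulty.} I expect the principal obstacle to lie in the last step: setting up the \emph{$\epsilon$-uniform} linear theory on the exterior domain. One must reconcile simultaneously the $O(\epsilon)$ boundary-layer scale living inside $\mathcal R_\epsilon$, the $O(1)$-scale subsonic elliptic operator for $\Phi$, the coupling between the two, and the prescribed decay at infinity in the correct functional framework. It is the interplay of these disparate scales, rather than any single estimate, that makes the uniform invertibility delicate, and this is the point on which the whole perturbation argument hinges.
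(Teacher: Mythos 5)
Your proposal is correct in outline, and its scaffolding — perturbing off $(\rho^\delta,\Phi^\delta)$, the density bounded away from zero, the boundary layer of width and amplitude $O(\epsilon)$ forced by $\partial_\nu\rho^\delta\neq 0$ with model operator $\epsilon^2\Delta-2(\rho^\delta)^2$, and an $\epsilon$-uniform linear theory closed by contraction — coincides with the paper's (your layer equation is exactly the corrector equation (\ref{s2.3}), with the barrier estimates (\ref{barrho})). But at the linear core you take a genuinely different route: a Schur-type elimination, solving the density equation first via the implicit function theorem to get $\rho=\mathcal R_\epsilon[\Phi]$, and then inverting the scalar quasilinear operator whose linearization $DQ(\Phi^\delta)=\nabla\cdot\bigl(\bigl[(1-|\nabla\Phi^\delta|^2)I-2\,\nabla\Phi^\delta\otimes\nabla\Phi^\delta\bigr]\nabla\,\cdot\bigr)$ is uniformly elliptic precisely when $|\nabla\Phi^\delta|^2<\tfrac13$. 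The paper never forms this reduced operator: it linearizes the coupled system around $W_1$ (the ansatz (\ref{rhophi}), note the phase is perturbed at order $\epsilon$, $\Phi_\epsilon=\Phi^\delta+\epsilon\Phi_2$) and proves invertibility of the pair $(L_1,L_2)$ by a simultaneous energy estimate (Theorem \ref{apriori}), in which subsonicity enters as the coupling bound $2\sup|\nabla\Phi^\delta|^2/(\rho^\delta+\rho_1)^2<1$ — algebraically the same inequality that makes your $DQ$ elliptic — followed by Byun--Wang $L^p$ estimates for divergence-form conormal problems and a degree argument on exhausting domains $B_R\setminus\Omega_\epsilon$. What each buys: your route is conceptually cleaner (ellipticity $=$ subsonicity, kernel killed by the maximum principle, which works here because $\partial_\nu\Phi^\delta=0$ makes Neumann coincide with the conormal condition on $\partial\Omega$), whereas the paper's coupled solve avoids establishing $\epsilon$-uniform mapping properties of the nonlocal composition $\Phi\mapsto\mathcal R_\epsilon[\Phi]$ sitting inside a divergence — the point you rightly flag as the main difficulty, since $D\mathcal R_\epsilon$ smooths only at scale $\epsilon$ and carries its own boundary layer, so differentiating the reduced coefficient costs a factor $\epsilon^{-1}$ in the layer, while in the coupled formulation no term is ever commuted through an inverse. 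If you pursue your version, you will need analogues of two devices the paper relies on: the structural observation (\ref{gdef}) that the continuity-equation error and nonlinearity are total divergences $\epsilon\nabla(g)$ with $g\cdot\nu=0$ on $\partial\Omega$, which is what legitimizes both the $\Phi$-energy estimate and the $L^p$ divergence theory; and the use of gradient-only norms $\|\nabla\Phi\|_{L^2}+\|\nabla\Phi\|_{L^4}$ for the phase, which sidesteps the constants/slow-decay pathology of exterior Neumann problems in $d=2$ that your weighted spaces would otherwise have to confront.
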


   Though the conclusion of the above theorem may be expected (and it has been often
assumed in many
physics literature), it lacks of a rigorous mathematical proof. In fact, even at a low
superfluid speed, we shall see that certain boundary layers near the obstacle may develop
and this causes difficulties for analysis. See (\ref{s2.3}) below.

\medskip

Let $ u_\epsilon =\rho_\epsilon e^{i \frac{\Phi_\epsilon}{\epsilon}}$ be the solution constructed in Theorem \ref{t1}. It turns out that when $d=2$ on the top of this solution a second solution exists. Interestingly the limiting profile of this second solution is the {\em traveling wave} solutions of Gross-Pitaevaskii equation. More precisely, set
\begin{equation}
 u= u_\epsilon \ v= \rho_\epsilon e^{i \frac{\Phi_\epsilon}{\epsilon}} \ v.
 \end{equation}
 Then $v$ satisfies (coupled with homogeneous Neumann boundary condition)
 \begin{equation}
 \epsilon^2 \Delta v + 2\epsilon^2 \nabla \rho_\epsilon \nabla v+ 2i \epsilon \nabla \Phi_\epsilon \nabla v + v \rho_\epsilon^2 (1- |v|^2)=0.
 \end{equation}

 Let $ x_0\in \partial \Omega$ and perform a rescaling as follows: $ x= x_0+\epsilon y$. Formally letting $\epsilon \to 0$, (and after proper scaling),  we obtain (assuming that $ \nabla \Phi^\delta (x_0)= |\nabla \Phi^\delta (x_0)| \vec{e}_2$) the following traveling wave equation
 \begin{equation}
\label{limit2.1}
\Delta U+  i c \frac{\partial U}{\partial y_2} +  U (1-|U|^2)=0\   \ \ \ \mbox{in} \ \R^2
\end{equation}
coupled with the following boundary condition
\begin{equation}
\label{limit2.2}
\frac{\partial U}{\partial y_1} (0, y_2)=0.
\end{equation}
We refer to Section 3.1 for more detailed derivations.

Here
\begin{equation}
\label{phi2}
 c = \frac{ 2 |\nabla \Phi^\delta (x_0)|}{ \sqrt{1-|\nabla \Phi^\delta (x_0)|^2}}.
 \end{equation}
 A simple computation shows that the subsonic condition (\ref{phi1}) is equivalent to the following speed condition
 \begin{equation}
  0<c <\sqrt{2}.
  \end{equation}

The traveling wave problem (\ref{limit2.1})-(\ref{limit2.2}) for Gross-Pitaevskii equation
 has been under study in many papers \cite{BS, bgs, grav1, grav2, grav3, grav4,
 lw-cpam}. It has been proved that for $c \geq \sqrt{2}$ there are no
traveling waves (\cite{grav4}). When $c <\sqrt{2}$, variational method
shows  that there are traveling wave solutions to (\ref{limit2.1}).
For $c$ small a perturbation argument can be used to show the
existence (\cite{lw-cpam}). The properties of solutions  constructed in \cite{lw-cpam}
would play an important role in the proof of our main theorem below.  The
asymptotic behavior and qualitative behavior of solutions are also
studied in many papers \cite{grav1, grav2, grav3, grav4}. What is
remarkable and fascinating is the fact that the critical speed for existence of
traveling wave solutions for the Gross-Pitaevskii equation on the entire
plane is directly related to the critical (sound) speed for stationary
flows of superfluids passing a smooth obstacle described by the Gross-Pitaevskii
equation through an explicit but nonlinear algebra relation (\ref{phi2}).

\medskip

Our second result shows the traveling wave solutions to (\ref{limit2.1}) persist for the superfluids
problem (\ref{1.1}), as long as $ |\delta |$ is suitably small (see Section 3 below).

\begin{theorem}
\label{t2}
Let $d=2$ and  $0<c<\sqrt{2}$. Let $U_{c}$ be a  solution of (\ref{limit2.1})-(\ref{limit2.2}) satisfying a nondegeneracy condition. (See Key Assumption (\ref{ker12}) below.)  Then   there exists $c_0>0$ and $\epsilon_0$  such that for $0<\epsilon <\epsilon_0, |c|<c_0$  problem (\ref{1.1}) has at least {\bf two smooth solutions}  of the form
\begin{equation}
u=(1+o(1)) u_\epsilon (x) U_{c } \left(\sqrt{1-|\Phi^\delta (x_0) |^2} \  \frac{x-x_\epsilon}{\epsilon} \right)
\end{equation}
where $u_\epsilon $ is the solution given by Theorem \ref{t1} and $U_{c}$ is the traveling wave solution of  problem (\ref{limit2.1})-(\ref{limit2.2}). Here $c$ is given by (\ref{phi2}),  $x_\epsilon \in \partial \Omega \to x_0$ where $  |\nabla \Phi^{\delta} |^2 (x_0)= \max_{x \in \partial \Omega} |\nabla \Phi^{\delta} |^2 (x)$ or $  |\nabla \Phi^{\delta} |^2 (x_0)= \min_{x \in \partial \Omega} |\nabla \Phi^{\delta} |^2 (x)$.
\end{theorem}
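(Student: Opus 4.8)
The plan is to carry out a Lyapunov--Schmidt reduction, using the factorized ansatz $u = u_\epsilon v$ so that $v$ solves the equation displayed just before the statement, and seeking $v$ close to a suitably placed and rescaled copy of the traveling wave $U_c$. For a boundary point $x_0 \in \partial\Omega$ I would fix a nearby center $x_\epsilon \in \partial\Omega$ as a free parameter and set $y = (x-x_\epsilon)/\epsilon$. The factor $\beta := \sqrt{1-|\nabla\Phi^\delta(x_0)|^2}$ appearing in the statement is exactly the one that normalizes the background density $\rho^\delta = 1-|\nabla\Phi^\delta|^2$ to the unit far-field value of the Gross--Pitaevskii traveling wave, while the transport term $2i\epsilon\,\nabla\Phi_\epsilon\cdot\nabla v$ produces precisely the drift $ic\,\partial_{y_2}U$ with $c$ given by (\ref{phi2}); this is the derivation of Section 3.1. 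The leading profile is therefore $V_{x_\epsilon}(x) := U_c\big(\beta(x-x_\epsilon)/\epsilon\big)$. Because $x_0\in\partial\Omega$, in the $y$-variable the obstacle boundary straightens to the line $\{y_1=0\}$ up to curvature terms of order $\epsilon$, and the Neumann condition $\partial u/\partial\nu=0$ reduces to (\ref{limit2.2}) at leading order. I would then build a refined approximation $V_\epsilon$ by adding corrections that absorb the boundary curvature, the slow variation of $\rho_\epsilon$ and $\nabla\Phi_\epsilon$, and restore the Neumann condition to higher order, and estimate the error $E_\epsilon := S(V_\epsilon)$ in a suitable norm, where $S(v)$ denotes the left-hand side of the equation for $v$.

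Next I would develop the linear theory. Writing $v = V_\epsilon + \phi$, I linearize $S$ at $U_c$ and analyze the resulting operator $L$. The traveling wave satisfies $U_c = 1 + O(|y|^{-1})$ with a dipole-type angular structure, so the correct functional setting uses weighted spaces that accommodate this slow decay. The Key Assumption (\ref{ker12}) pins down the kernel of $L$ as spanned by the symmetries of $U_c$ (translation along $y_2$ and the global phase), and I would use it to establish invertibility of $L$ modulo this finite-dimensional kernel, with bounds uniform in small $\epsilon$ and in the center $x_\epsilon$. With these a priori estimates, the auxiliary (infinite-dimensional) equation $\Pi^\perp S(V_\epsilon + \phi) = 0$ is solved for $\phi = \phi(x_\epsilon)$ by a contraction-mapping argument, yielding a small correction whose size is controlled by $\|E_\epsilon\|$.

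Finally, the problem reduces to the finite-dimensional bifurcation equation $\Pi\, S(V_\epsilon + \phi(x_\epsilon)) = 0$, which I would reformulate as the Euler--Lagrange condition for a reduced energy $\mathcal{E}_\epsilon(x_\epsilon)$ on the compact boundary $\partial\Omega$. Expanding $\mathcal{E}_\epsilon$ and isolating its leading $x_\epsilon$-dependence, I expect the dominant term to be a strictly monotone function of the local background speed $|\nabla\Phi^\delta(x_\epsilon)|^2$ felt by the vortex. Since $|\nabla\Phi^\delta|^2$ attains both a maximum and a minimum on the compact set $\partial\Omega$, the reduced functional possesses at least two critical points, one near each extremum, and these produce the two distinct solutions asserted.

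I expect the main obstacle to be the combination of the linear theory and the reduced-energy expansion. The slow, dipolar decay of $U_c$ makes the interaction between the vortex, the inhomogeneous background $u_\epsilon$, and the curved Neumann boundary only marginally localized, so controlling these interaction terms precisely enough to read off the $|\nabla\Phi^\delta|^2$-dependence of $\mathcal{E}_\epsilon$, and to guarantee the non-degeneracy needed to convert critical points of $\mathcal{E}_\epsilon$ into genuine solutions, is the delicate part; the nondegeneracy of $U_c$ via (\ref{ker12}) is precisely what makes the reduction closeable.
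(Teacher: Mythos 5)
Your overall strategy is the paper's: the same factorized ansatz $u = u_\epsilon v$, the same rescaled profile $W(y) = U_c\bigl(\sqrt{1-|\nabla\Phi^\delta(x_0)|^2}\,y\bigr)$, the same use of the Key Assumption to invert the linearization modulo the phase mode $iU_c$ and the translation mode $\partial_{y_2}U_c$, and a contraction mapping for the projected nonlinear problem with the center $x_\epsilon\in\partial\Omega$ as the reduction parameter. Where you diverge is in the finite-dimensional step. You propose a \emph{variational} reduction: recast the bifurcation equation as the Euler--Lagrange condition of a reduced energy $\mathcal{E}_\epsilon(x_\epsilon)$ on $\partial\Omega$ and take its max and min. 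The paper instead introduces \emph{two} Lagrange multipliers $\lambda_0,\lambda_1$ (attached to $Z_0\sim iW$ and $Z_1\sim\partial_{y_2}W$) and treats them by direct expansion: projecting the error $\mathbb{S}[W]$ onto $\partial_{y_2}\bar W$ yields $\lambda_1 = A_0\,\epsilon\,\partial_{\tau_{x_0}}\bigl(|\nabla\Phi^\delta|^2\bigr) + \mathcal{O}(\epsilon^{1+\sigma})$ with $A_0>0$, and an intermediate-value argument along $\partial\Omega$ produces two zeros of $\lambda_1$; then $\lambda_0=0$ is obtained separately by multiplying the equation by $\bar v_\epsilon$ and taking the imaginary part (gauge invariance). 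Your variational formulation would dispose of the phase multiplier automatically, which is cleaner, but you should say so explicitly --- as stated, your reduced energy is a function of the single parameter $x_\epsilon$, while the kernel is two-dimensional, so something must kill the projection onto $iU_c$; the paper's Step 3 is precisely that missing bookkeeping. Also note that the paper does not build the curvature/boundary correctors you propose: it works with $W$ itself and absorbs the Neumann mismatch into the boundary norm $\|\cdot\|_{**,b}$ via the estimate (\ref{bdry}), and the smallness of $\delta$ (equivalently $|c|<c_0$) is genuinely used in the outer a priori estimates, not merely for convenience.

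One concrete gap: your final step takes for granted that the extrema of $|\nabla\Phi^\delta|^2$ on $\partial\Omega$ yield two \emph{distinct} critical points, but this fails if $|\nabla\Phi^\delta|^2$ is constant on $\partial\Omega$ --- then the leading term of $\mathcal{E}_\epsilon$ (equivalently of $\lambda_1$) vanishes identically and the expansion locates nothing. The paper closes this by a unique continuation argument: if $|\nabla\Phi^\delta|$ were constant on $\partial\Omega$, then since $\partial\Phi^\delta/\partial\nu = 0$ the tangential derivative would be constant, contradicting $\nabla\cdot\bigl((1-|\nabla\Phi^\delta|^2)\nabla\Phi^\delta\bigr)=0$ via unique continuation. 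You need this (or an equivalent non-constancy statement) before invoking max/min of the reduced energy; with it, your local max/min persist under the $\mathcal{O}(\epsilon^{1+\sigma})$ perturbation without any non-degeneracy hypothesis on the extrema, so the worry you raise at the end about converting critical points into solutions is resolved by taking stable (extremal) critical points rather than non-degenerate ones.
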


Theorem \ref{t2} shows not only the phenomena of vortex nucleations in stationary flows of
superfluids but also a somewhat surprising new phenomena that even before the "sonic" speed vortices
can nucleate near the boundary of the obstacle. It does rigorously justify
some seemly strange conclusions drawn from previous numerical studies in \cite{huepe1, huepe2}.
We believe that the second solution exists for all subsonic speed $ c < \sqrt{2}$.
But this remains to be an open problem and we wish to return to this issue later, see remarks in
Section 3. Theorem \ref{t2} also shows the situation near the superfluid "sonic" speed
may be much more complex than some formal studies done previously, \cite{fpr, jp,
jpr, R}, and the study of the latter situation would need a new set of tools and
ideas.

\medskip

As we mentioned earlier, the proofs of Theorems \ref{t1} and \ref{t2} are based on perturbations of {\em two} kinds of solutions.
In the proof of Theorem \ref{t1} the primary ansatz is the solution to (\ref{1.3}). The linearized operator is a system whose Fourier transforms are {\em anisotropic}. The additional difficulty is the existence of {\em boundary layers}. We use energy method and a priori estimates to prove Theorem \ref{t1}.   Theorem \ref{t2} is perturbed from a traveling wave solution to the Gross-Pitaevskii equation (\ref{limit2.1}) in the whole space.
Under a nondegenerate condition of solutions to (\ref{limit2.1}), which we verify for $ |c |<<1$, we prove Theorem \ref{t2} by finite dimensional Liapunov-Schmidt reduction method.

\medskip

Throughout this paper, we always assume that $d=2$ or $3$ (which are the physical dimensions). (The result of Theorem \ref{t1} is likely to hold for $d\geq 4$ as well.) The constant $C$ is a  positive generic constant independent of $\epsilon<\epsilon_0$ and $c<c_0$. Denote  $ B_\rho (y)= \{ x \ | \ |x-y|<\rho\}$. We also use the following notation
$$ <y>:= \sqrt{1+|y|^2}, \ \ \  <f, g>= Re (\int f \bar{g}). $$

\medskip

\noindent {\bf Acknowledgments:}  The research of the first author is
partially supported by the NSF grant DMS-1501000. The
research of the second author is partially supported by NSERC of Canada.

\setcounter{equation}{0}
\section{Proof of Theorem \ref{t1} }

In this section, we prove the existence of vortex free solution, e.g. Theorem \ref{t1}.

First we introduce two nonlinear operators
\begin{equation}
\label{S1S2}
 S_1 [\rho, \Phi]= \epsilon^2 \Delta \rho +\rho (1-\rho^2 -|\nabla \Phi |^2), \ \ S_2 [\rho, \Phi] =\nabla (\rho^2 \nabla \Phi).
\end{equation}
Then equation (\ref{1.2}) can be written in an operator form
\begin{equation}
\label{2.1}
S_1 [\rho, \Phi]= S_2 [\rho, \Phi]=0, \ \ ( \rho, \Phi) \in ( C_\nu^{2, \alpha} ( \R^d \backslash \Omega))^2
\end{equation}
where $ C^{2, \alpha}_\nu (\R^d \backslash \Omega)= C^{2, \alpha} (\R^d \backslash \Omega) \cap \{ \frac{\partial \rho}{\partial \nu}=0 \ \mbox{on} \ \partial \Omega \} $ and $\alpha \in (0,1)$ is the H\"{o}lder exponent.

As mentioned in the introduction, formally letting $\epsilon=0$ in equation (\ref{2.1}), we obtain the limiting problem (\ref{1.3}).  We first collect some additional  properties of solutions to (\ref{1.3}),  which will be needed in later sections.

 \begin{lemma}
 \label{t2.11}

Let $ \Phi^\delta$ be the solution to (\ref{1.3}) given  in Theorem \ref{t2.11}. Then as $|x| \to +\infty$,
 \begin{equation}
 \label{rhophidecay}
 \nabla \Phi^\delta (x) = \delta \vec{e}_d  + {\mathcal O} (\frac{1}{|x|}), \ \ \  1- |\nabla \Phi^\delta |^2 = 1- \delta^2 + {\mathcal O} (\frac{1}{|x|}).
 \end{equation}

 \end{lemma}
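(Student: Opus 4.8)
The plan is to establish the asymptotic decay $\nabla \Phi^\delta(x) = \delta \vec{e}_d + \mathcal{O}(1/|x|)$ by analyzing the linearization of the quasilinear equation $\nabla\cdot((1-|\nabla\Phi|^2)\nabla\Phi)=0$ about the constant-gradient solution $\Phi_0(x) = \delta\, x_d$ at spatial infinity. First I would write $\Phi^\delta = \delta\, x_d + \psi$, where $\psi$ captures the perturbation induced by the obstacle. Substituting into the flow equation and expanding the coefficient $a(\nabla\Phi) := 1-|\nabla\Phi|^2 = 1-\delta^2 - 2\delta\,\partial_d\psi - |\nabla\psi|^2$, the leading-order operator acting on $\psi$ becomes a constant-coefficient second-order elliptic operator. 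By Theorem~\ref{t0}(ii) we have $|\nabla\Phi^\delta|<1/3$ everywhere, so the equation is uniformly elliptic and $|\nabla\psi|$ is bounded; in particular the linearized operator is a perturbation of $(1-\delta^2)\Delta + (\text{anisotropic correction from }\partial_d^2)$, which is a nondegenerate elliptic operator with constant leading coefficients.

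Once the linearization is identified as an anisotropic Laplacian $L_0 := (1-\delta^2)\Delta - 2\delta^2\partial_d^2$ (up to rescaling), the key step is to invoke the classical decay theory for exterior elliptic problems. The plan is to treat $\psi$ as a solution of $L_0\psi = f$ in the exterior domain, where $f$ collects the quadratic nonlinear remainder $\nabla\cdot(|\nabla\psi|^2\nabla\Phi^\delta + \cdots)$, which is itself of lower order because $\nabla\psi\to 0$ at infinity (this vanishing at infinity is built into the statement $\nabla\Phi^\delta\to\delta\vec{e}_d$, part of Theorem~\ref{t0}). After an affine change of variables $y_d = x_d/\sqrt{(1-\delta^2)/(1-3\delta^2)}$ (or the appropriate stretching) that converts $L_0$ into the standard Laplacian, the harmonic-function decay in an exterior domain of $\R^d$ gives $\psi = \mathcal{O}(|x|^{2-d})$ for $d\geq 3$ and a logarithmic-plus-constant behavior for $d=2$, while crucially $\nabla\psi = \mathcal{O}(|x|^{1-d})$. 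Since $d\geq 2$, this yields $\nabla\psi = \mathcal{O}(1/|x|)$ in every dimension, which is exactly the claimed rate for $\nabla\Phi^\delta$. The second assertion then follows immediately by expanding $1-|\nabla\Phi^\delta|^2 = 1-\delta^2 - 2\delta\,\partial_d\psi - |\nabla\psi|^2 = 1-\delta^2 + \mathcal{O}(1/|x|)$.

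The references to Bers, Finn--Gilbarg, and Dong cited before Theorem~\ref{t0} almost certainly contain these decay estimates as part of their existence theory for subsonic exterior flows, so the cleanest route is to cite those works for the asymptotic behavior rather than reprove it. The plan is therefore to state that the decay rates are a standard consequence of the exterior elliptic regularity theory developed in \cite{fg, do} for $d\geq 3$ and \cite{bers1, bers2} for $d=2$, and to supply only the short computation verifying that the linearized operator is uniformly elliptic (which rests on \eqref{phi1}) and that the nonlinear remainder is subcritical.

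\medskip

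\noindent\emph{Main obstacle.} The delicate point will be the dimension $d=2$, where harmonic functions in an exterior domain do not decay but approach a constant, so one cannot naively conclude $\psi\to 0$; one must instead argue at the level of $\nabla\psi$ directly, using that the obstacle is compact and the flux condition $\partial\Phi/\partial\nu=0$ on $\partial\Omega$ forces the leading logarithmic mode to have zero coefficient (there is no net source). Controlling this borderline behavior—showing that the $\mathcal{O}(1/|x|)$ rate for the gradient survives in two dimensions and that the quadratic nonlinearity genuinely feeds back at one lower order rather than destroying the estimate—is the step I expect to require the most care, and is presumably where the cited two-dimensional theory of Bers does the heavy lifting.
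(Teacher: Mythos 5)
Your proposal is correct and ultimately takes the same route as the paper: the paper's entire proof is a citation of the asymptotic estimates in [Theorem III, \cite{bers1}] for $d=2$ and \cite{do} for $d=3$, which is exactly the "cleanest route" you settle on. Your supplementary linearization sketch (the anisotropic operator $(1-\delta^2)\Delta - 2\delta^2\partial_d^2$, elliptic since $\delta^2<1/9$ by \eqref{phi1}, and the flagged $d=2$ borderline issue) is sound but is precisely the content the cited classical works supply, so the paper does not reproduce it.
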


\begin{proof}
The asymptotic behavior  can be found in [Theorem III, \cite{bers1}] (in the case of $d=2$) and \cite{do} (in the case $d=3$).
\end{proof}

\subsection{Approximation solution and boundary layer}

For the first approximation function, we take
$$
 W_0= (\rho^\delta, \Phi^\delta )
$$
where $\Phi^\delta $  is the solution given in Theorem \ref{t2.11} and $\rho^\delta= 1-|\nabla \Phi^\delta |^2$.
It is easy to see that
\begin{equation}
S_1 [W_0]=\epsilon^2 \Delta \rho^\delta, \ \ S_2 [W_0 ]=0.
\end{equation}

We observe that for this initial approximate solution $W_0$, the second component satisfies the Neumann boundary condition but  the first component does not, which has to be corrected by a {\em boundary layer}. To this end, we now add a correction  function to the first component: let $ \rho_1 $ be the unique solution of
\begin{equation}
\label{s2.3}
\epsilon^2 \Delta \rho_1 -2 (\rho^\delta)^2 \rho_1=\epsilon^2 \Delta \rho^\delta \ \  \ \mbox{in} \ \R^d \backslash \Omega, \ \ \
\frac{\partial \rho_1}{\partial \nu }= -\frac{\partial \rho^\delta }{\partial \nu } \ \mbox{on}  \ \partial \Omega.
\end{equation}

Observe that by the estimates for  $|\nabla \Phi^\delta|$ in Lemma \ref{t2.11}, it holds that
\begin{equation}
 |\Delta \rho^\delta |\lesssim  \ <x>^{-3}.
 \end{equation}

On the other hand, using classical barrier function, we have the following estimates for $\rho_1 $:
\begin{equation}
\label{barrho}
\rho_1 \lesssim \  \epsilon e^{- C  d(x, \partial \Omega)/\epsilon} + \epsilon^2 <x>^{-3}, \ |\nabla \rho_1 | \lesssim \ e^{- C d(x, \partial \Omega)/\epsilon}+\epsilon^2 <x>^{-4}.
\end{equation}

Let us choose the second approximation as follows
\begin{equation}
 W_1= (\rho^\delta +\rho_1, \Phi^\delta  ).
 \end{equation}
We then compute
\begin{eqnarray}
S_1 [W_1] & = &  {\mathcal O}  (\rho_1^2)= {\mathcal  O} (\epsilon^2 e^{- C d(x, \partial \Omega)/\epsilon } +\epsilon^4 <x>^{-6}), \label{S1W}\\
 S_2 [W_1] & = & 2 \nabla (\rho_\delta \rho_1 \nabla \Phi^\delta) +\nabla (\rho_1^2 \nabla (\Phi^\delta ))=  \nabla \left( {\mathcal O} ( \epsilon e^{- C d(x, \partial \Omega)/\epsilon } +\epsilon^2 <x>^{-4} ) \right).
 \label{S2W}  \end{eqnarray}

Now we linearize around $W_1$ as follows
\begin{equation}
\label{rhophi}
 \rho_\epsilon = \rho^\delta + \rho_1 +\rho_2, \ \Phi_\epsilon=\Phi^\delta+ \epsilon \Phi_2
 \end{equation}
and  obtain
$$ S_1 [\rho_\epsilon, \Phi_\epsilon]= S_1 [W_1]+  L_1 (\rho_2, \Phi_2)+ N_1 (\rho_2, \Phi_2) $$
$$ S_2 [\rho_\epsilon, \Phi_\epsilon] = S_2 [W_1]+ \epsilon^{-1} L_2 (\rho_2, \Phi_2) + \epsilon^{-1} N_2 (\rho_2, \Phi_2) $$
where
$$ L_1 (\rho, \Phi)= \epsilon^2 \Delta \rho_2 -2 (\rho^\delta+\rho_1)^2 \rho_2 +2 \epsilon (\rho^\delta+\rho_1) \nabla \Phi^\delta \nabla \Phi_2,
$$
$$ L_2 (\rho, \Phi)= \epsilon^2 \nabla ((\rho^\delta+\rho_1)^2 \nabla \Phi_2)+ \epsilon \nabla (2 (\rho^\delta+\rho_1) \rho_2 \nabla \Phi^\delta),
$$
and
\begin{equation}
\label{N1N2}
\left\{\begin{array}{l}
 N_1= (\rho^\delta+\rho_1) (-\epsilon^2 |\nabla \Phi_2|^2) +\rho_2 (-2 \epsilon  \nabla \Phi^\delta \nabla \Phi_2 -\epsilon^2 |\nabla \Phi_2|^2 -3 (\rho^\delta+ \rho_1)\rho_2-\rho_2^2), \\
 N_2= \epsilon^2 \nabla (2 (\rho^\delta +\rho_1) \rho \nabla \Phi_2)+\epsilon \nabla (\rho^2 \nabla (\Phi^\delta +\epsilon \Phi_2)).
\end{array}
\right.
\end{equation}
Observe  that $N_2$ is of the form
\begin{equation}
 N_2= \epsilon \nabla (g), \   \ \mbox{where}\ \ \ \ g=  2\epsilon  (\rho_\delta +\rho_1)\rho_2  \nabla \Phi_2+\rho^2 \nabla (\Phi^\delta +\epsilon \Phi_2)).
 \end{equation}

An important observation is  that if $ \frac{\partial \rho }{\partial \nu}=\frac{\partial \Phi}{\partial \nu}=0$ on $ \partial \Omega$,  then it holds that
\begin{equation}
\label{gdef}
 g \cdot \nu =0 \ \mbox{on} \ \partial \Omega.
 \end{equation}

 We aim to solve the following system of equations
 \begin{equation}
 \label{neweqn}
 \left\{\begin{array}{l}
  L_1 (\rho_2, \Phi_2)= - E_{1,1}- N_1 (\rho_2, \Phi_2), \\
L_2 (\rho_2, \Phi_2) =-  E_{1,2}- N_2 (\rho_2, \Phi_2)=-\epsilon \nabla (g)
\end{array}
\right.
\end{equation}
where
 \begin{equation}
 \label{error00}
  E_{1,1}= {\mathcal O} (\rho_1^2)= {\mathcal O} (\epsilon^2 e^{- C d(x, \partial \Omega)/\epsilon } +\epsilon^4 <x>^{-6}),  \ E_{1,2}= \epsilon S_2 [W_1]= \epsilon \nabla \left( {\mathcal O}( \epsilon e^{- C d(x, \partial \Omega)/\epsilon } +\epsilon^2 <x>^{-4} ) \right).
 \end{equation}

The computations above provides basic estimates to proceed in the next steps.

\subsection{Norms and Errors}
We now introduce weighted Sobolev spaces. Let $ x=\epsilon y$ where $ y\in \R^d \backslash  \Omega_\epsilon$ with $\Omega_\epsilon= \frac{\Omega}{\epsilon}$. The operator ${\mathbb L}_1$ and $ {\mathbb L}_2$ becomes
 $$ L_1 (\rho, \Phi)=  \Delta_y \rho -2 (\rho_\delta+\rho_1)^2 \rho +2  (\rho_\delta+\rho_1) \nabla_x \phi_\delta \nabla_y \Phi
$$
$$ L_2 (\rho, \Phi)= \nabla_y ((\rho_\delta+\rho_1)^2 \nabla \Phi)+ \nabla (2 (\rho_\delta+\rho_1) \rho \nabla_x \Phi^\delta).
$$

 To this end,  we define the following weighted norms:
\begin{equation}
\| \rho\|_{*,1}=\sup_{y\in \R^d \backslash \Omega_\epsilon}  \|  \rho\|_{W^{2,4}  (B_1 (y) \cap \R^d \backslash \Omega_\epsilon) } + \| \rho \|_{W^{1,2} (\R^d \backslash \Omega_\epsilon)},
\end{equation}
\begin{equation}
\| \Phi\|_{*,2}=\| \nabla \Phi \|_{L^{2} (\R^d \backslash \Omega_\epsilon)}+\| \nabla \Phi \|_{L^{4} (\R^d \backslash \Omega_\epsilon)}
\end{equation}
\begin{equation}
\| f\|_{**,1}= \| f \|_{L^2 (\R^d \backslash \Omega_\epsilon)},
\end{equation}
\begin{equation}
\| g\|_{**,2}= \| g \|_{L^2 (\R^d \backslash \Omega_\epsilon)}+\| g \|_{L^4 (\R^d \backslash \Omega_\epsilon)}.
\end{equation}

 Since
$$ \epsilon^4 <x>^{-6} \lesssim\  \epsilon <\frac{|x|}{\epsilon}>^{-3}, \ \ \ \epsilon^2 <x>^{-4} \lesssim\   \epsilon^{1/2}  <\frac{|x|}{\epsilon}>^{-\frac{3}{2}},
$$
 from (\ref{error00}), we derive that
\begin{equation}
\label{error2m}
\| E_{1,1} \|_{**,1 } + \| E_{1,2} \|_{**,2} \lesssim \ \epsilon^{\sigma}.
\end{equation}
where $ \beta = 1+\sigma$ for some $\sigma \in (0, 1)$. (In fact we may choose $\sigma=\frac{1}{2}$.)

\subsection{A priori estimates} To proceed with the perturbation, we need the following important {\em a priori estimates}.

\begin{theorem}
\label{apriori}
For $\epsilon$ sufficiently small and for  each $(f, g)$ with $ \| f\|_{**,1}+\| g\|_{**,2} <+\infty$, there exists a pair $(\rho, \phi)$ satisfying
\begin{equation}
\label{apriori1}
\left\{\begin{array}{l}
L_1 [\rho, \Phi] = f, L_2 [\rho, \Phi]=\nabla g, \ \  \mbox{in} \ \R^d \backslash \Omega,\\
  \ \frac{\partial \rho}{\partial \nu}=0,   \frac{\partial \Phi}{\partial \nu}= 0 \  \ \mbox{on}\  \partial \Omega.
\end{array}
\right.
\end{equation}
Furthermore
\begin{equation}
\label{2p}
\| \rho \|_{*,1} + \| \Phi\|_{*,2 } \lesssim \| f\|_{**,1 }+ \|  g \|_{**, 2}.
\end{equation}
\end{theorem}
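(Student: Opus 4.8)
The plan is to establish the a priori estimate \eqref{2p} through a contradiction/compactness argument combined with a direct construction of solutions via the energy method. First I would set up the linearized system \eqref{apriori1} as the Euler--Lagrange equations of a suitable quadratic functional. The natural candidate comes from the linearization of the energy associated with \eqref{1.2}: testing the first equation $L_1[\rho,\Phi]=f$ against $\rho$ and the second $L_2[\rho,\Phi]=\nabla g$ against $\Phi$, one obtains a bilinear form whose leading terms are $\int |\nabla\rho|^2 + 2\int(\rho^\delta+\rho_1)^2\rho^2$ from the first equation (after integrating by parts, using $\partial\rho/\partial\nu=0$) and $\int(\rho^\delta+\rho_1)^2|\nabla\Phi|^2$ from the second (using $\partial\Phi/\partial\nu=0$ so the boundary term $\int_{\partial\Omega}g\cdot\nu\,\Phi$ vanishes by \eqref{gdef}). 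The cross terms $2(\rho^\delta+\rho_1)\nabla\Phi^\delta\cdot\nabla\Phi$ are the dangerous part, since they couple $\rho$ and $\nabla\Phi$ and are not sign-definite.

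The key structural fact I would exploit is Theorem~\ref{t0}(ii), namely $|\nabla\Phi^\delta|<\tfrac13$, so that $\rho^\delta=1-|\nabla\Phi^\delta|^2$ is bounded below away from zero (roughly $\rho^\delta>\tfrac89$). This means the diagonal coercive terms dominate the cross terms: by Cauchy--Schwarz, the cross term $2\int(\rho^\delta+\rho_1)\nabla\Phi^\delta\cdot\nabla\Phi\,\rho$ is controlled by $\eta\int|\nabla\Phi|^2(\rho^\delta+\rho_1)^2 + C_\eta|\nabla\Phi^\delta|^2_\infty\int\rho^2$, and since $|\nabla\Phi^\delta|_\infty<\tfrac13$ is strictly subsonic the constant in front of $\int\rho^2$ can be made smaller than the coefficient $2(\rho^\delta+\rho_1)^2\approx 2$ coming from the $S_1$-linearization. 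This quantitative gap between $2$ and $2|\nabla\Phi^\delta|^2<\tfrac29$ is precisely what makes the full bilinear form coercive on the product space with norm $\|\rho\|_{W^{1,2}}+\|\nabla\Phi\|_{L^2}$. With coercivity in hand, Lax--Milgram (or direct minimization of the functional, handling the exterior-domain noncompactness via the decay of coefficients from Lemma~\ref{t2.11}, which forces the operator to approach a constant-coefficient problem at infinity) yields existence of a weak solution and the basic $L^2$-gradient bound.

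Having obtained the weak $W^{1,2}\times L^2$ estimate, I would upgrade to the full $*$-norms in two stages. The $W^{2,4}$ and $L^4$ components are recovered by local elliptic regularity: on each ball $B_1(y)$ the first equation is $\Delta\rho = $ (lower-order terms plus $f$), so interior/boundary Schauder-type $L^p$ estimates give the $\sup_y\|\rho\|_{W^{2,4}(B_1(y))}$ bound, and differentiating the divergence-form second equation $\nabla\cdot((\rho^\delta+\rho_1)^2\nabla\Phi)=\nabla g$ (with $\rho^\delta+\rho_1$ bounded above and below) gives the $L^4$ gradient bound via Calder\'on--Zygmund theory; here the uniform-in-$y$ nature of the weighted norm is what lets me patch local estimates into a global one, using that the coefficients are uniformly $C^\alpha$ after the rescaling $x=\epsilon y$.

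The main obstacle I anticipate is the boundary layer and the exterior-domain geometry acting together. The coefficient $\rho_1$ is only exponentially small away from $\partial\Omega$ but carries a large gradient of order $1$ near the boundary (see \eqref{barrho}), so the operators $L_1,L_2$ have coefficients that are not uniformly smooth down to scale $\epsilon$; I would need to verify that these boundary-layer contributions are absorbed into the coercivity gap rather than destroying it, which is plausible because $\rho_1$ enters quadratically or is multiplied by the already-small $\nabla\Phi^\delta$. A secondary difficulty is that the domain is unbounded, so coercivity alone does not immediately give existence without controlling behavior at infinity; I would address this by exhausting $\R^d\setminus\Omega$ with bounded domains $B_R\setminus\Omega$, solving there, and passing to the limit using the uniform estimate \eqref{2p} together with the decay in \eqref{rhophidecay} to prevent mass from escaping to infinity. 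Establishing uniform coercivity on the truncated problems, uniformly in both $R$ and $\epsilon$, is where the most care will be required.
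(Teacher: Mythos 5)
Your proposal takes essentially the same route as the paper: the global $L^2$ estimate is obtained exactly as in the paper's proof by testing the two equations against $\rho$ and $\Phi$, absorbing the cross terms through the quantitative subsonic gap $2\sup |\nabla \Phi^\delta|^2/(\rho^\delta+\rho_1)^2 < 1$ (a consequence of $|\nabla\Phi^\delta|<\tfrac13$), then upgrading to the weighted $W^{2,4}$/$L^4$ norms via elliptic regularity and $L^p$ estimates for divergence-form operators (using the boundary condition \eqref{gdef}), and finally producing existence by solving on truncated domains $B_R\setminus\Omega_\epsilon$ and letting $R\to\infty$. The only immaterial difference is that the paper uses a degree argument on the bounded domains where you propose Lax--Milgram/minimization.
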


\begin{proof}
We first prove a global $L^2$ estimate. Namely if $ \| f\|_{L^2}+ \| g \|_{L^2}<+\infty$ then we have
\begin{equation}
\label{A11}
\int_{\R^d \backslash \Omega_\epsilon } (|\nabla \rho|^2+ |\nabla \Phi |^2+\rho^2) \lesssim  \int_{\R^d \backslash \Omega_\epsilon } (|f|^2+|g|^2).
\end{equation}

In fact multiplying the first equation in (\ref{apriori1}) by $ \rho$  we obtain
$$ \int_{\R^d \backslash \Omega_\epsilon } (|\nabla \rho|^2+ 2 (\rho_0+\rho_1)^2 \rho^2) = 2\int_{\R^d \backslash \Omega_\epsilon }  (\rho_\delta +\rho_1)\nabla_x |\Phi^\delta| \nabla_y \Phi  \rho -  \int_{\R^d \backslash \Omega_\epsilon } \rho f
$$
and hence
\begin{equation}
\label{A12}
2 \int_{\R^d \backslash \Omega_\epsilon } ( (\rho_0+\rho_1)^2 \rho^2) \leq  2 \sup \frac{ |\nabla_x \Phi^\delta|^2 }{ (\rho_0+\rho_1)^2 } \int_{\R^d \backslash \Omega_\epsilon } (\rho_\delta+\rho_1)^2  |\nabla \Phi |^2 + C \int_{\R^d \backslash \Omega_\epsilon } f^2
\end{equation}
Multiplying the second equation in (\ref{apriori1}) by $\Phi$ and using (\ref{gdef}) we obtain
\begin{equation}
\label{A13}
\int_{\R^d \backslash \Omega_\epsilon } ( (\rho_\delta+\rho_1)^2 |\nabla \rho|^2) \leq 2 \int_{\R^d \backslash \Omega_\epsilon } (\rho_\delta +\rho_1) \rho \nabla \Phi^\delta \nabla \Phi  + \int_{\R^d \backslash \Omega_\epsilon } g \nabla \Phi.
\end{equation}
Substituting (\ref{A13}) into (\ref{A12}) and using the fact that $ 2 \sup \frac{ |\nabla \Phi^\delta|^2 }{ (\rho_0+\rho_1)^2 } <1$, we obtain the apriori estimates (\ref{A11}).

To finish the proof of a priori estimates, we use elliptic regularity theory.  Since  $ \rho \in W^{2,2}(\R^2 \backslash \Omega_\epsilon)$ and $d\leq 2,3$,  we have that $\nabla \rho \in L^4 (\R^2 \backslash \Omega_\epsilon)$. By $L^p-$ estimates for divergence operators (see \cite{BW}), we also obtain $\nabla \Phi \in L^4 (\R^2 \backslash \Omega_\epsilon)$. (Here the condition (\ref{gdef}) is used.)

From the a priori estimates (\ref{A11}) and standard degree argument we obtain the existence of the system (\ref{apriori1}) in any bounded domain $ B_{R} \backslash \Omega_\epsilon$ for $R$ large, coupled with Dirichlet boundary condition
$$ \rho=\Phi=0 \ \mbox{on} \ \  \partial B_R.$$
Then letting $R\to +\infty$ we obtain a solution satisfying (\ref{2p}).

\end{proof}

\subsection{Proof of Theorem \ref{t1}}

Theorem \ref{t1} can be now proved by a contraction mapping argument to solve (\ref{neweqn}). Let
$$ \|(\rho_2, \Phi_2)\|_{*}:= \| \rho_2 \|_{*, 1} + \| \Phi_2\|_{*,2 }$$
$$ \| (f, g) \|_{**} :=  \| f\|_{L^2(\R^d \backslash \Omega_\epsilon)  }+ \| g \|_{L^2 (\R^d \backslash \Omega_\epsilon)} +\| g \|_{L^4 (\R^d \backslash \Omega_\epsilon)}.
$$

 In fact by the estimates (\ref{error2m}) we have
\begin{equation}
\| (E_{1,1}, E_{1,2}) \|_{* } \lesssim \epsilon^\sigma.
\end{equation}

Let $ (\rho_2, \Phi_2)$ be such that $ \| (\rho, \Phi)\|_{*} \lesssim \epsilon^\sigma$. Let us estimate the nonlinear terms. In the rescaled variable $x=\epsilon y$, we have
$$
N_1= (\rho_\delta+\rho_1) (- |\nabla_y \Phi|^2) +\rho_2  (-2  \nabla \Phi^\delta \nabla_y \phi_2 - |\nabla_y \phi_2 |^2 -3 (\rho_\delta+ \rho_1)\rho_2-\rho_2^2).
$$
Since $ \nabla \Phi_2 \in L^4, \rho_2 \in  W^{2,2}$, we see that for $d=2,3$, $ |\rho_2 |_{L^\infty} \lesssim \epsilon^\sigma$ and that
\begin{equation}
\label{N1est}
 \| N_1\|_{L^2} \lesssim \epsilon^{2\sigma}.
 \end{equation}
Similarly in the rescaled variable
$$ N_2=  \nabla_y (2 (\rho_\delta +\rho_1) \rho \nabla_y \Phi_2 )+ \nabla (\rho^2 \nabla_y (\phi_\delta +\epsilon \Phi_2 ))=\nabla_y (g)$$
with
\begin{equation}
\label{N2est}
 \| g\|_{L^2 }+\| g\|_{L^4} \lesssim \epsilon^{2\sigma}.
 \end{equation}

From (\ref{N1est}) and (\ref{N2est}) and a  standard contraction mapping we obtain Theorem \ref{t1}. \qed

\subsection{Remarks on further estimates of $(\rho_2, \Phi_2)$ near the boundary}

For later purpose, we need to expand $ (\rho_2, \Phi_2)$ near a boundary point. It turns out that the estimate is better. Let $d=2$ and $ x_0 \in \partial \Omega$. We may assume that the normal direction is $ \nu_{x_0}= \vec{e}_1$, the tangential direction is  $\tau_{x_0} = \vec{e}_2$ and $ \R^d \backslash \Omega \subset \{ x_1 \leq 0 \}$. Rescale $ x= x_0+\epsilon \bar{y}$. Then we see that
\begin{equation}
\label{rho1}
\rho_1 (x_0+\epsilon \bar{y}) \sim  \epsilon  (-\frac{\partial \rho^\delta}{\partial \nu} (x_0) )  e^{ \sqrt{2} \rho^\delta (x_0) \bar{y}_1} + {\mathcal O} (\epsilon^2 e^{ C \bar{y}_1} ).
\end{equation}

With (\ref{rho1}) we can give better estimates on $\epsilon S_2 [W_1]$. Note that
$$ \epsilon S_2 [W_1] \sim \epsilon \nabla (\rho^\delta \rho_1 \nabla \Phi^\delta)  + \epsilon \nabla ( {\mathcal  O} ( \epsilon^2) e^{C\bar{y}_1} ) $$
For the first term, we have
$$  \rho^\delta \rho_1 \nabla \Phi^\delta  \sim \epsilon (-\frac{\partial \rho^\delta}{\partial \nu} (x_0) )  e^{ \sqrt{2} \rho^\delta (x_0) \bar{y}_1}   \rho^\delta (x_0) |\nabla \Phi^\delta (x_0)| \vec{e}_2 $$
Thus
$$ \epsilon S_2 [W_1] \sim \epsilon \nabla  (  \epsilon (-\frac{\partial \rho^\delta}{\partial \nu} (x_0) )  e^{ \sqrt{2} \rho^\delta (x_0) \bar{y}_1}   \rho^\delta (x_0) |\nabla \Phi^\delta (x_0)| \vec{e}_2 ) + \epsilon \nabla ( {\mathcal O}( \epsilon^2)e^{C\bar{y}_1}).
$$
This implies that
\begin{equation}
\epsilon S_2 [W_1] \sim \epsilon \nabla ( {\mathcal O} ( \epsilon^2)e^{C\bar{y}_1})
\end{equation}

Since the remaining errors in  (\ref{error00}) carries at least $\epsilon^2$ order, we conclude that
\begin{equation}
 \label{error000}
 E_{1,1}= {\mathcal O} (\epsilon^2 e^{-\frac{ C d(x, \partial \Omega)}{\epsilon}} +\epsilon^4 <x>^{-6}),  \ \ E_{1,2}= \epsilon \nabla ( \epsilon^2 e^{- C d(x, \partial \Omega)/\epsilon } +\epsilon^2 <x>^{-4} ).
 \end{equation}

Therefore we may take $ \sigma= 1+\sigma_0 \in (1,2)$  in the proof of Theorem \ref{t1} to obtain that
\begin{equation}
\label{320}
 \rho_\epsilon = \rho^\delta + {\mathcal O} (\epsilon^{1+\sigma_0}), \Phi_\epsilon =\Phi^\delta + {\mathcal O} (\epsilon^{2+\sigma_0} ).
 \end{equation}

\setcounter{equation}{0}
\section{Derivation and Mapping properties of Traveling Waves}

\subsection{Derivation of traveling wave equation}
Let $u_\epsilon=\rho_\epsilon e^{i \frac{\Phi_\epsilon}{\epsilon}}$ be the solution constructed in Theorem \ref{t1}.  By the remark at the last section, we have  $
 \rho_\epsilon = \rho^\delta + {\mathcal O} (\epsilon^{1+\sigma_0} ), \Phi_\epsilon =\Phi^\delta + {\mathcal O} (\epsilon^{2+\sigma_0} ) $ for some $\sigma_0>0$. (In the computations below we may simply assume that $ \rho_\epsilon=\rho^\delta, \Phi_\epsilon= \Phi^\delta.$)

Now we look for another solution of the following form
\begin{equation}
u=\rho_\epsilon e^{i \frac{\Phi_\epsilon}{\epsilon}}\ v.
\end{equation}
We see that $v$ satisfies
\begin{equation}
\label{3.2n}
\left\{\begin{array}{l}
\epsilon^2 \Delta v+2\epsilon^2  \nabla \log \rho_\epsilon \nabla v+ 2i \epsilon \nabla \Phi_\epsilon \nabla v+ v \rho_\epsilon^2 (1-|v|^2)
=0 \ \mbox{in} \ \R^d \backslash \Omega, \\
\ \frac{\partial v}{\partial \nu }=0\ \mbox{on} \ \partial \Omega.
\end{array}
\right.
\end{equation}

In the following, we explain the intuitive  idea in the construction of the second solution. Let $x_0 \in \partial \Omega$ (to be determined later) and we perform a blow-up near $x_0$: let $x=x_0+\epsilon y$. We see that $\nabla \Phi_\epsilon (x)=\nabla \Phi^\delta (x_0)+ {\mathcal O} (\epsilon^{1+\sigma})$ (see (\ref{320})). (Without loss of generality we may assume that $ \nabla \Phi^\delta (x_0) = |\nabla \Phi^\delta (x_0)| e_d$.) Since $ \rho_\epsilon (x)=\rho^\delta (x)+ {\mathcal O} (\epsilon)=\rho^\delta (x_0) +{\mathcal O} (\epsilon |y| +\epsilon^\sigma)$,  we see that formally the limit equation  for (\ref{3.2n}) becomes
\begin{equation}
\label{3.2}
\Delta v +2 i |\nabla \Phi^\delta (x_0)| \frac{\partial v}{\partial y_d} + v (1-|\nabla \Phi^\delta (x_0)|^2) (1-|v|^2)=0 .
\end{equation}
Changing $y= \frac{ \bar{y}}{\sqrt{1-|\nabla \Phi^\delta (x_0)|^2}}$, $ U(y)= v(\bar{y})$ we see that (\ref{3.2}) is equivalent to
\begin{equation}
\label{3.2v}
\Delta_{\bar{y}}  U+ic \frac{\partial U}{\partial \bar{y}_d} + U  (1-|U|^2)=0 \ \ \mbox{in} \ \R^d
\end{equation}
with $c=\frac{2 |\nabla \Phi^\delta (x_0)| }{\sqrt{1-|\nabla \Phi^\delta (x_0)|^2}}$. Observe that
\begin{equation}
0<c <\sqrt{2} \ \iff \ |\nabla \Phi^\delta (x_0)|^2 <\frac{1}{3}
\end{equation}
which is equivalent to the subsonic assumption (\ref{phi1})  in Theorem \ref{t2.11}.

Problem (\ref{3.2v}) arises as the traveling wave solution  $ u(\bar{y}-c t \vec{e}_d) $ for the Gross-Pitaevskii equation
\begin{equation}
i u_t= \Delta u+ u(1-|u|^2) \ \ \ \ \ \ \ \ \ \ \ \ \mbox{in} \ \R^d.
\end{equation}

It has been proved that a necessary condition for the existence of traveling waves is $ |c|<\sqrt{2}$. In the case of small speed $|c|<<1$, the existence of traveling vortex  ($d=2$) and vortex rings ($d\geq 3$) has been proved by Bethuel-Saut \cite{BS}, Bethuel-Orlandi-Smets \cite{BOS}, and \cite{lw-cpam}. The method of \cite{BS, BOS} is variational while
we used a perturbation approach which is related to what we will employ in this paper. The asymptotics of traveling wave solutions is studied in \cite{grav1}-\cite{grav4}. For general speed $c$, we refer to Bethuel-Gravajat-Saut \cite{BGS}, Gravajat \cite{grav1, grav2, grav3, grav4} and references therein.

\subsection{Properties of traveling waves solutions}

In this sub-section, we are concerned with the properties of the traveling wave solution to (\ref{3.2v}) in $\R^2$. So from now on, we assume that $d=2$ and we consider
\begin{equation}
\label{limit3.1}
\Delta U+ic \frac{\partial U}{\partial y_2} + U  (1-|U|^2)=0 \ \ \mbox{in} \ \R^2.
\end{equation}

  In  \cite{lw-cpam}, we used a perturbation approach to prove the existence of a traveling wave solution to (\ref{limit3.1}) with two opposite vortices traveling in the direction $y_2$. We summarize the  result in the following

\begin{lemma}
\label{property}
(\cite{lw-cpam})
For $c$ sufficiently small, there exists a solution $U_c$ to (\ref{limit3.1}) with the following properties

\noindent
(i) $U_c ( y )= w^{+} (y-d_c) w^{-} (y+d_c) + {\mathcal O} (|c|)$, where $ w^{\pm} (y)$ is the unique degree 1 (or $-1$) solution of Ginzburg-Landau equation
\begin{equation}
\label{limit3.11}
\Delta U+ U  (1-|U|^2)=0 \ \ \mbox{in} \ \R^2,
\end{equation}
and
 \begin{equation}
 d_c= ({\mathcal O} (\frac{1}{c}), 0);
 \end{equation}

 \noindent
 (ii) $U_c$ is even in $y_1$, i.e.,
 \begin{equation}
 \frac{\partial U_c}{\partial y_1}=0 \ \ \mbox{on} \ y_1=0;
 \end{equation}

\noindent
(iii) Writing $U_c (y)= S(y) e^{ i \varphi (y)}$, then it holds that
\begin{equation}
\label{32}
|\nabla S (y)| \leq \frac{ C(c)}{ |y- y_c|^3}, |\nabla \varphi (y)| \leq \frac{C(c)}{|y-y_c|^2}
\end{equation}
where $C(c) \lesssim |c|^{-2}$.
 \end{lemma}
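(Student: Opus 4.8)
The plan is to construct $U_c$ by a singular perturbation (Lyapunov--Schmidt) argument in which the separation of a vortex dipole plays the role of the reduction variable. Take as approximate solution
\[
W_d(y) = w^{+}(y - d)\, w^{-}(y + d), \qquad d = (d_1, 0),
\]
where $w^{\pm}$ are the unique degree $\pm 1$ solutions of the Ginzburg--Landau equation (\ref{limit3.11}); these are nondegenerate modulo the symmetries, the kernel of the linearized Ginzburg--Landau operator about $w^{\pm}$ being spanned by $\partial_{y_1} w^{\pm}$ and $\partial_{y_2} w^{\pm}$. First I would compute the error $S[W_d] := \Delta W_d + ic\,\partial_{y_2} W_d + W_d(1 - |W_d|^2)$. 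Since each factor solves (\ref{limit3.11}), the error splits into the vortex--vortex interaction term, which at each core is of size $\mathcal{O}(1/d_1)$, and the drift term $ic\,\partial_{y_2} W_d$, of size $\mathcal{O}(c)$. The balance between these two effects is exactly what forces the separation to scale like $d_1 \sim 1/c$, giving $d_c = (\mathcal{O}(1/c), 0)$ as claimed in (i).

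Next I would set $U = W_d + \phi$ and solve for the correction $\phi$ in a weighted space, orthogonal to the approximate kernel
\[
Z_d = \operatorname{span}\{\partial_{y_1} W_d,\ \partial_{y_2} W_d\}.
\]
The linearized operator $L_d \phi = \Delta \phi + ic\,\partial_{y_2}\phi + (1 - |W_d|^2)\phi - 2\,\mathrm{Re}(\overline{W_d}\,\phi)\,W_d$ is, away from the two cores, a small perturbation of the free Ginzburg--Landau linearization, and near each core a perturbation of the nondegenerate single-vortex linearization. The key analytic step is to prove that $L_d$, restricted to the orthogonal complement of $Z_d$, is invertible with a bound uniform in $c$ (equivalently in the large separation $d_1$). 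Granting this, a contraction mapping produces a unique small $\phi = \mathcal{O}(|c|)$ depending smoothly on $d$, which yields the expansion in (i). The reduced problem is the two-dimensional system obtained by pairing the full equation against $Z_d$; at leading order it is precisely the balance between the logarithmic vortex interaction and the Magnus force generated by the drift, and solving it fixes $d_c$ with $|d_c| = \mathcal{O}(1/c)$.

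The evenness in $y_1$, property (ii), comes for free by working throughout in the symmetry class respected by the equation: (\ref{limit3.1}) is equivariant under the reflection $R\colon (y_1,y_2)\mapsto(-y_1,y_2)$, which maps the dipole to itself after swapping $w^{+}\leftrightarrow w^{-}$. Choosing $Z_d$ and the weighted spaces to be invariant under $R$ confines the entire construction to functions even in $y_1$, so the resulting $U_c$ satisfies $\partial_{y_1}U_c = 0$ on $\{y_1 = 0\}$. For the far-field estimates (iii), I would linearize at infinity: writing $U_c = (1 + \sigma)e^{i\varphi}$ with $\sigma,\nabla\varphi \to 0$, the real and imaginary parts of (\ref{limit3.1}) decouple at leading order into the drift--Laplace system
\[
-\Delta \sigma + 2\sigma \approx -c\,\partial_{y_2}\varphi, \qquad \Delta \varphi \approx c\,\partial_{y_2}\sigma,
\]
whose solutions exhibit dipole-type behavior: $\varphi$ behaves like a derivative of the fundamental solution, giving $|\nabla\varphi|\sim |y|^{-2}$, while the massive field $\sigma$ is slaved to $\nabla\varphi$ and decays one power faster, $|\nabla S|\sim |y|^{-3}$. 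Tracking the $c$-dependence of the source terms through these weighted estimates produces the stated constant $C(c)\lesssim |c|^{-2}$.

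The hard part will be the uniform invertibility of $L_d$ on the orthogonal complement of $Z_d$ as $c\to 0$: at separation $d_1\sim 1/c$ the two vortices are nearly decoupled, so the operator carries two families of near-zero modes that must be cleanly projected out, and one must exclude further small eigenvalues arising both from the interaction and from the non-self-adjoint drift $ic\,\partial_{y_2}$. A secondary difficulty is that the interaction decays only algebraically rather than exponentially, so the weighted norms must be chosen delicately in order to control $S[W_d]$, the correction $\phi$, and the reduced map in a single space; those same weights are what ultimately deliver the sharp decay rates in (iii).
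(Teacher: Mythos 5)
Your outline reproduces, essentially correctly, the skeleton of the construction that this paper does not actually carry out but cites: the proof of Lemma \ref{property} given here is one line, deferring (i)--(ii) to the Lyapunov--Schmidt construction of \cite{lw-cpam} (recalled in Section \ref{sec6}: dipole ansatz $V_d$ along the $y_1$-axis, reduction in $d$, reduced equation $c_\epsilon(d)=c_1\epsilon-c_2/d+{\mathcal O}(\epsilon^{1+\sigma})$ forcing $d\sim 1/c$) and (iii) to Gravejat \cite{grav1}. Your balance of the drift term against the ${\mathcal O}(1/d)$ vortex interaction, and your use of the reflection symmetry class to obtain evenness in $y_1$, match that scheme; in the symmetric class the reduction is effectively one-dimensional (pairing with $\partial V_d/\partial d$), and the $y_2$-component of your two-dimensional reduced system vanishes identically, which is harmless.

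There are, however, two genuine gaps in the way you set up the linear theory. First, your description of the bounded kernel of the linearized Ginzburg--Landau operator about $w^{\pm}$ is incomplete: besides the translations it contains the gauge mode $iw^{\pm}$ (this is precisely the nondegeneracy statement of \cite{DFK} invoked in Section \ref{sec6}, where the limiting kernel is $\alpha_0(iw^{+})+\alpha_1\partial_{y_1}w^{+}+\alpha_2\partial_{y_2}w^{+}$). Since $iW_d$ is even in $y_1$, it survives your symmetry reduction, so $L_d$ restricted to the complement of $Z_d=\mathrm{span}\{\partial_{y_1}W_d,\partial_{y_2}W_d\}$ is \emph{not} uniformly invertible as $c\to 0$; one must project out the gauge mode as well and remove the corresponding multiplier a posteriori by gauge invariance, exactly as in Step 3 ($\lambda_0=0$) of Section 4. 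Second, the additive ansatz $U=W_d+\phi$ with $\phi$ small in a single decaying weighted norm cannot close the contraction: the true correction is phase-dominated, with the real part of the phase perturbation merely bounded and only its gradient decaying, which is why \cite{lw-cpam} (and Sections 4 and 6 here) use the parametrization $v=\eta(V_d+iV_d\psi)+(1-\eta)V_de^{i\psi}$ with \emph{different} weights on $\psi_1$ and $\psi_2$; your far-field discussion for (iii) implicitly concedes this by switching to $(1+\sigma)e^{i\varphi}$. Relatedly, your slaving heuristic for (iii) yields the correct exponents but is not a proof: the sharp anisotropic decay $|\nabla\varphi|\lesssim|y|^{-2}$, $|\nabla S|\lesssim|y|^{-3}$ requires the convolution-kernel analysis of \cite{grav1}, which is how the paper obtains it.
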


\begin{proof}
Properties (i)-(iii) follow from the constructions given in \cite{lw-cpam}. (In \cite{lw-cpam}, Schrodinger map is studied. But the same computations work for  (\ref{limit3.1}) as well.) For the asymptotics (\ref{32}), it also follows from \cite{grav1}.

\end{proof}

The following theorem give a complete characterization of the kernels of the linearized operator, at least when the speed $c$ is small. The proof of it will be delayed to Section \ref{sec6}.

\begin{theorem}
\label{t3.1}
Let $U_c$ be the solution constructed in \cite{lw-cpam}. Consider the linearized operator
\begin{equation}
\label{ker10}
{\mathbb L}_0 [\phi]= \Delta \phi + i c\frac{\partial \phi}{\partial y_2} +  (1- |U_c|^2) \phi -2 (U_c \cdot \phi) U_c.
\end{equation}
Then for $c$ sufficiently small, the only bounded kernels satisfying
\begin{equation}
\label{ker1}
\left\{\begin{array}{l}
{\mathbb L}_0 [\phi]=0 \ \ \mbox{in} \ \R_{+}^2= \{ (y_1, y_2) | y_1>0 \}, \\
 \frac{\partial \phi }{\partial y_1}=0 \ \ \mbox{on} \ \partial \R^2_{+}, \\
 \| \phi\|_{L^\infty (\R_{+}^2)} <+\infty,
 \end{array}
 \right.
 \end{equation}
are
\begin{equation}
\phi= \beta_1 (i U_c)+  \beta_2 \frac{\partial U_c}{\partial y_2}
\end{equation}
where $\beta_1$ and $ \beta_2$ are some constants.
\end{theorem}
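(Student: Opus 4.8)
The plan is to reduce the half-plane Neumann problem to a full-plane nondegeneracy statement by an even reflection, and then to analyze the full-plane kernel by a singular-perturbation (Lyapunov--Schmidt / gluing) argument built on the nondegeneracy of a single Ginzburg--Landau vortex. First I would exploit the symmetry that $U_c$ is even in $y_1$ (Lemma \ref{property}(ii)). Since the coefficients $1-|U_c|^2$ and $U_c$ of ${\mathbb L}_0$ are then even in $y_1$, and $\partial_{y_2}$ commutes with the reflection $y_1\mapsto -y_1$, one checks directly that ${\mathbb L}_0$ commutes with this reflection and hence maps even functions to even functions. Thus any bounded $\phi$ solving (\ref{ker1}) extends by even reflection across $\{y_1=0\}$ to a bounded solution $\tilde\phi$ of ${\mathbb L}_0[\tilde\phi]=0$ on all of $\R^2$: the Neumann condition $\partial_{y_1}\phi=0$ on $\partial\R^2_{+}$ is precisely the compatibility condition that makes $\tilde\phi$ of class $C^1$ across the axis, and elliptic regularity upgrades it to a classical solution. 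Conversely the restriction of any even full-plane kernel element satisfies (\ref{ker1}). So (\ref{ker1}) is equivalent to determining the even-in-$y_1$ part of the bounded kernel of ${\mathbb L}_0$ on $\R^2$.

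Next I would establish the full-plane nondegeneracy: for $c$ small the bounded kernel of ${\mathbb L}_0$ on $\R^2$ is exactly $\mathrm{span}_{\R}\{\,iU_c,\ \partial_{y_1}U_c,\ \partial_{y_2}U_c\,\}$. These three are genuine bounded kernel elements, generated respectively by the global phase invariance $U\mapsto e^{i\theta}U$ and the two translations of (\ref{limit3.1}); boundedness follows from Lemma \ref{property}. For the reverse inclusion I would use that, by Lemma \ref{property}(i), $U_c=w^{+}(y-d_c)w^{-}(y+d_c)+{\mathcal O}(|c|)$ is, up to $\mathcal O(|c|)$, a product of two opposite degree-one vortices whose separation $|d_c|=\mathcal O(1/c)\to\infty$. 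Near each core ${\mathbb L}_0$ is a small perturbation of the linearized Ginzburg--Landau operator about a single vortex, whose bounded kernel is known to be the three-dimensional span of $\{iw,\partial_1 w,\partial_2 w\}$. I would then decompose any bounded kernel element as a sum of the six localized single-vortex modes (three per core) plus a remainder $\phi^{\perp}$ orthogonal to all of them in suitable weighted norms, eliminate $\phi^{\perp}$ via a coercivity/invertibility estimate for ${\mathbb L}_0$ on the orthogonal complement that is uniform as $|d_c|\to\infty$, and reduce to a finite-dimensional linear system for the six coefficients. Three independent combinations are fixed by the exact symmetries (producing $iU_c,\partial_{y_1}U_c,\partial_{y_2}U_c$); the remaining relative-phase and relative-position modes are forced to vanish because the balancing (reduced) equation that determines $d_c\sim(\mathrm{const}/c,0)$ in the construction \cite{lw-cpam} is nondegenerate.

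Finally I would read off parities. Since $U_c$ is even in $y_1$, both $iU_c$ and $\partial_{y_2}U_c$ are even, while $\partial_{y_1}U_c$ is odd. Hence the even-in-$y_1$ part of the full-plane kernel is exactly $\mathrm{span}_{\R}\{iU_c,\partial_{y_2}U_c\}$, which upon restriction to $\R^2_{+}$ gives precisely the stated form $\phi=\beta_1(iU_c)+\beta_2\,\partial U_c/\partial y_2$, completing the proof.

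The hard part will be the uniform invertibility of ${\mathbb L}_0$ on the orthogonal complement as $c\to 0$, equivalently as the vortices separate to infinity. The operator is non-self-adjoint: the first-order term $ic\,\partial_{y_2}\phi$ neither decays nor is symmetric, so the variational coercivity of the Ginzburg--Landau energy does not transfer directly and the relevant quadratic form must be controlled by hand. Compounding this, the vortex modes decay only polynomially (the phase satisfies $|\nabla\varphi|\lesssim|y-y_c|^{-2}$, cf. (\ref{32})), so the functional setting requires weighted norms rather than plain $L^2$, and one must patch the near-core regions, controlled by the single-vortex spectral gap, to the far field, governed by the operator $\Delta+ic\,\partial_{y_2}$ acting on slowly decaying tails. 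Establishing the reduced-problem nondegeneracy that removes the relative modes is the other delicate point, and it is precisely where the fine properties of the solutions of \cite{lw-cpam} enter.
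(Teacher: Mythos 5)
Your reflection step is sound and in fact reproduces the paper's geometry: since $U_c$ is even in $y_1$, even extension identifies (\ref{ker1}) with the even-in-$y_1$ part of the bounded kernel of ${\mathbb L}_0$ on $\R^2$, and the paper's half-plane construction (a single vortex at $d\vec{e}_1$ with Neumann data on $\{y_1=0\}$) is precisely this even sector of the vortex--antivortex pair. The genuine gap is in your middle step. You propose the \emph{full-plane} nondegeneracy (kernel $=\mathrm{span}\{iU_c,\partial_{y_1}U_c,\partial_{y_2}U_c\}$) via a six-mode gluing reduction, and you assert that the three relative modes ``are forced to vanish because the balancing (reduced) equation that determines $d_c$ is nondegenerate.'' But the balancing equation $c_\epsilon(d)=0$ of \cite{lw-cpam} is one scalar equation in one scalar parameter; its nondegeneracy $\partial_d c_\epsilon=c_2/d^2+{\mathcal O}(\epsilon^{2+\sigma})\neq 0$ (cf. (\ref{cd})) controls exactly one direction, namely the symmetric change of separation $\partial V_d/\partial d$. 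It says nothing about the relative transverse-translation mode or the relative-phase mode, which live in the odd sector and would need separate reduced computations; moreover the relative-phase mode cannot be handled as one of your ``localized single-vortex modes'' at all, because $iw$ does not decay --- in the weighted framework of \cite{lw-cpam} phase perturbations are global (the $\psi$-formulation), not localized. As written, your reduced $6\times 6$ system is therefore undetermined. The fix is to drop the full-plane claim and work in the even sector (equivalently, the half plane) throughout: there the only non-symmetry mode is the separation mode, exactly the one killed by $\partial_d c_\epsilon\neq 0$. That is the paper's route: it differentiates the constructed family $u_d=V_d+\phi_{\epsilon,d}$ of (\ref{Vd}) in the reduction parameter to get $\omega=\partial_d u_d$ satisfying ${\mathbb L}_0[\omega]=(\partial_d c_\epsilon)\,\partial V_d/\partial d$ at $d=d_\epsilon$ (see (\ref{ud})), pairs a putative kernel element $\phi$ against $\omega$ to force $\mathrm{Re}\int_{\R^2_+}\partial_d V_d\,\bar\phi=0$ (see (\ref{ddV})), and then runs a blow-up argument using the single-vortex nondegeneracy of \cite{DFK} together with the orthogonality conditions (\ref{ortho}) to get (\ref{k2}) and $\beta_1=0$. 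This reuses the construction's uniform linear theory instead of rebuilding a uniform-in-$|d_c|$ invertibility estimate from scratch, which is the heaviest item in your plan.

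A secondary correction: your ``hard part'' partly rests on a false premise. The term $ic\,\partial_{y_2}$ \emph{is} symmetric with respect to the real pairing $\langle f,g\rangle=\mathrm{Re}\int f\bar g$ (integrating by parts gives $\mathrm{Re}\int i\partial_{y_2}f\,\bar g=\mathrm{Re}\int f\,\overline{i\partial_{y_2}g}$), so ${\mathbb L}_0$ is formally self-adjoint in this pairing --- that is exactly what legitimizes the Green-identity pairing of (\ref{ud}) against (\ref{ker1}) in the paper. The true delicacy there is not asymmetry but slow decay (cf. (\ref{32})): one must verify that $\partial V_d/\partial d$ decays fast enough for $\mathrm{Re}\int\partial_d V_d\,\bar\phi$ to converge when $\phi$ is merely bounded, and that the boundary terms at infinity in the integration by parts vanish; those are the estimates your weighted norms should be aimed at.
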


In the general large $c$ case, the following result is proved in several papers (\cite{BS}-\cite{BGS}).

\begin{theorem}

\label{t3.3}

\noindent
(1) If $c \geq \sqrt{2}$, there are no traveling wave solutions to (\ref{limit3.1});

\noindent
(2) For $ 0<c< \sqrt{2}$, there exists a solution to (\ref{limit3.1}) which is the ground state solution among the fixed moment;

\noindent
(3) Let $ c<\sqrt{2}$, and $U_c$ be a solution to (\ref{limit3.1}. Then as $ |y| \to +\infty$,
\begin{equation}
\label{Uclimit}
 U_c- 1= O( <y>^{-1})
 \end{equation}

\end{theorem}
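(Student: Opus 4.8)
The three assertions are established in the cited references \cite{BS}--\cite{BGS}; the plan is to indicate the mechanism behind each. The natural setting throughout is the energy--momentum framework: a finite-energy traveling wave is a critical point of $E(U)-c\,p(U)$, where $E(U)=\frac12\int_{\R^2}|\nabla U|^2+\frac14\int_{\R^2}(1-|U|^2)^2$ is the Ginzburg--Landau energy and $p(U)$ is the (renormalized) scalar momentum in the $y_2$ direction. The complication common to all three parts is that $U\to1$ at infinity rather than $0$, so one works in a Zhidkov-type space in which $1-|U|^2\in L^2$ and $\nabla U\in L^2$, and one must control contributions at infinity with care.

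For part (1) the plan is to derive Pohozaev-type identities by testing (\ref{limit3.1}) against $\bar U$, against $y_1\partial_{y_1}\bar U$, and against $y_2\partial_{y_2}\bar U$, taking real parts and integrating over $\R^2$. Each multiplier produces a relation among the split kinetic energies $\int|\partial_{y_1}U|^2$ and $\int|\partial_{y_2}U|^2$, the potential $\int(1-|U|^2)^2$, and the momentum $p(U)$. Eliminating the common integrals from these scaling relations yields an identity in which the coefficient of a surviving nonnegative term degenerates exactly at $c=\sqrt2$ and acquires the wrong sign for $c>\sqrt2$, forcing $U$ to be a constant of modulus one and hence ruling out any genuine traveling wave. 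The delicate point is to justify each integration by parts with no leakage at infinity, which requires enough a priori decay of $\nabla U$ and of $1-|U|^2$; this is supplied by the finite-energy framework together with elliptic regularity, and is quantified by part (3).

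For part (2) the plan is to fix the momentum, $p(U)=p_0$, and to minimize $E$ over this constraint. Existence of a minimizer follows from the concentration--compactness principle adapted to the nonvanishing-at-infinity setting: vanishing is excluded by a lower bound on $E$ in terms of $p_0$, and dichotomy by the strict subadditivity of the minimal energy $E_{\min}(p_0)$ as a function of $p_0$. The speed $c$ appears precisely as the Lagrange multiplier of the momentum constraint, and monotonicity of the energy--momentum curve shows that as $p_0$ ranges over $(0,+\infty)$ the multiplier $c$ sweeps the full subsonic interval $(0,\sqrt2)$. The main difficulty is the compactness step, since translations in the propagation direction $y_2$ leave the functional invariant and must be quotiented out before a minimizing sequence can be shown to converge.

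Finally, for part (3) the plan is to linearize at infinity. Writing $U=1+\psi$ with $\psi=\psi_1+i\psi_2$ and retaining the leading order gives the constant-coefficient system $\Delta\psi_1-c\,\partial_{y_2}\psi_2-2\psi_1=0$ and $\Delta\psi_2+c\,\partial_{y_2}\psi_1=0$. Its Fourier symbol has determinant $|\xi|^4+2|\xi|^2-c^2\xi_2^2$, whose real zero set, for $c<\sqrt2$, reduces to the single point $\xi=0$ (this is exactly where the subsonic condition enters, the supersonic case instead producing a nontrivial characteristic set reflecting the resonance with sound waves responsible for the nonexistence in part (1)). Because the determinant vanishes only to second order at the origin, the associated Green kernel decays like $|y|^{-1}$; passing the source terms, which are at least quadratic in $\psi$, through this kernel and bootstrapping yields $U-1=O(<y>^{-1})$ with a dipole-type leading profile. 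I expect this far-field analysis to be the main obstacle: the decay is only algebraic, so exponential barriers are unavailable and one must instead extract the rate from the explicit fundamental solution of the degenerate linearized operator, as carried out in \cite{grav1}.
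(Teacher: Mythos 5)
Your proposal is correct and matches the paper, whose entire proof of this theorem is a citation --- (1) and (2) to \cite{bgs} and (3) to \cite{grav1} --- and your sketches faithfully reflect the mechanisms of those references: Pohozaev-type identities justified by decay estimates for the nonexistence, constrained minimization of $E$ at fixed momentum with concentration--compactness for the ground state, and the convolution/Fourier-kernel analysis of the linearization at infinity, whose symbol determinant $|\xi|^4+2|\xi|^2-c^2\xi_2^2$ you compute correctly, for the $O(\langle y\rangle^{-1})$ decay. Two small caveats, neither of which affects the verdict since the statement is a literature summary: the sonic endpoint $c=\sqrt{2}$ in (1) is not captured by a sign degeneration in the Pohozaev combination alone and requires the finer limit-at-infinity analysis of \cite{grav2}, and concavity of $E_{\min}(p)$ gives Lagrange multipliers sweeping $(0,\sqrt{2})$ only up to at most countably many exceptional values, which is precisely the looseness already present in the paper's formulation of part (2).
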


\begin{proof}
(1) and (2) are proved in \cite{bgs} and (3) is proved in \cite{grav1}. (Note that the exact asymptotics of $U_c-1$ is not precised in dimension two.)
\end{proof}

By Theorem \ref{t3.1}, there exists a nondegenerate solution $U_c$ for $c$ small. From now, we assume the following:

\medskip

\noindent
{\bf Key Assumption:} {\em There exists a nondegenerate solution $U_c$ for $c \in (0, c_0)$ for some $ c_0 \in (0, \sqrt{2}]$. This means that there exists a solution to (\ref{limit3.1}) such that the only solutions to the linearized problem  (\ref{ker1})
are
\begin{equation}
\label{ker12}
\phi= \beta_1 (i U_c)+  \beta_2 \frac{\partial U_c}{\partial y_2}
\end{equation}
where $\beta_1$ and $ \beta_2$ are some constants.
}

\begin{remark}
For $c$ small, there are higher energy solutions with $ l(l+1)$ traveling vortices. These vortices are located at the  roots of  Adler-Moser polynomials. The existence and properties of these multiple vortices solutions are considered in \cite{LiuWei}. We believe that these higher energy solutions are also nondegenerate, which  may provide new solutions to equation (\ref{1.1}).
\end{remark}

\subsection{Perturbation of traveling wave solution and sketch of proof of Theorem \ref{t2}}

Under the {\bf Key Assumption}, in the remaining sections,  we use a finite dimensional reduction method to rigorously prove the existence of such solutions to (\ref{3.2n}) and hence give the proof of Theorem \ref{t2}. The important step is the determination of $x_0 \in \partial \Omega$.

We solve (\ref{3.2n}) in three steps.

\medskip

\noindent
{\bf Step 1:}  Fixing each $x_0 \in \partial \Omega$ we show that there exists a unique solution to (\ref{3.2n}) and two Lagrange multipliers $\lambda_0, \lambda_1$ such that
\begin{equation}
\label{3.6}
\left\{\begin{array}{l}
  \epsilon^2 \nabla (\rho_\epsilon^2 \nabla v) + 2i  \epsilon \rho_\epsilon^2 \nabla \Phi_\epsilon \nabla v+ v \rho_\epsilon^4 (1-|v|^2)
=\lambda_0 i \rho_\epsilon^2 Z_0 + \lambda_1 \rho_\epsilon^2 Z_1 \ \mbox{in} \ \R^2 \backslash \Omega, \\
\ \frac{\partial v}{\partial \nu }=0\ \mbox{on} \ \partial \Omega,  \\
v \sim U_c
\end{array}
\right.
 \end{equation}
where $ Z_0$ and $ Z_1$ are defined at (\ref{Zd}).

\medskip

\noindent
{\bf Step 2:} We solve the following algebraic equation
\begin{equation}
 \lambda_1=0.
\end{equation}
By asymptotic expansion, we find that
\begin{equation}
 \lambda_1 \sim \frac{\partial  }{\partial \tau_{x_0}} (|\nabla \Phi^\delta |^2 (x_0)).
\end{equation}
where $\tau_{x_0}$ denotes the tangential direction at $x_0 \in \partial \Omega$.

By placing $x_0$ near the maximum or the minimum of $ |\nabla \Phi^\delta |^2 (x_0)$ on the $\partial \Omega$, we can adjust $x_0$ such that $ \lambda_1 =0$. (Hence we always obtain at least {\em two} solutions, as stated in Theorem \ref{t2}.)

\medskip

\noindent
{\bf Step 3:} In the last step we use the gauge invariance of the equation to show that $\lambda_0=0$.

\medskip

In the following sections, we carry out this procedure. At first we need to understand the error and invertibility of the linearized operator.

\setcounter{equation}{0}
\section{Proof of Theorem \ref{t2}}

In this section we follow the steps outlined above to prove Theorem \ref{t2}.

\subsection{Approximate solutions and error estimates}

Let $x_0 \in \partial \Omega$ and $ x= x_0+\epsilon y $. ($x_0$ is {\em a priori} undetermined.) Without loss of generality we  also assume that the outer normal direction $ \nu_{x_0}=  \vec{e}_1$ and the tangential direction $ \tau_{x_0}= \vec{e}_2$. In this stretched variable equation (\ref{3.2n}) is transformed to 
\begin{equation}
\label{4.1}
\Delta_y v+ 2\epsilon  \nabla_x \log \rho_\epsilon \nabla_y v + 2i   \nabla_x \Phi_\epsilon \nabla v+   \rho_\epsilon^2 v (1-|v|^2)=0.
\end{equation}
Here the new stretched domain becomes
$$ y\in \R^2 \backslash \Omega_{\epsilon, x_0}, \ \ \mbox{where} \ \Omega_{\epsilon, x_0}= \frac{1}{\epsilon} (\Omega -\{ x_0\}) $$
and
$$ \frac{\partial v}{\partial \nu}=0 \ \ \mbox{on} \ \partial \Omega_{\epsilon, x_0}. $$

We write $ W (y):= U_c (\bar{y})= S(\bar{y}) e^{i \varphi (\bar{y})} $ where $U_c$ is given by Theorem \ref{t3.1} and $ \bar{y}= \sqrt{ 1- |\nabla \Phi^\delta (x_0)|^2} \ y$.  (Note that $S(\bar{y})$ is not radially symmetric.)  To avoid clumsy notations, we also drop the dependence on the speed $c$ and  the location $x_0$.
Then $W$ satisfies
\begin{equation}
\label{4.2}
\Delta_y  W + i |\nabla \Phi^\delta (x_0)|  \frac{\partial W}{\partial y_2 } + (\rho^\delta (x_0))^2 W (1-|W|^2)=0.
\end{equation}
By Theorem \ref{t3.3}, we have the following decaying estimate for $S$ and $\varphi$:
\begin{equation}
\label{Svardecay}
\nabla S= {\mathcal O} ( <y>^{-3}), \  \ \nabla \varphi ={\mathcal  O} (<y>^{-2}).
\end{equation}

We write (\ref{4.1})  as a solution operator
\begin{equation}
\label{2.00}
{\mathbb S} [v]=
\Delta v +
+ v \rho_\epsilon^2 (1-|v|^2) + 2\epsilon  \nabla_x \log \rho_\epsilon \nabla_y v+ 2 i   \nabla_x  \Phi_\epsilon \cdot   \nabla_y v.
\end{equation}
Using (\ref{4.2}) we get
\begin{equation}
\label{SW1}
{\mathbb S} [W]=2\epsilon  \nabla_x \log \rho_\epsilon \nabla_y W+ 2 i   (\nabla_x  \Phi_\epsilon - |\nabla \Phi^\delta (x_0)|  \vec{e}_2) \cdot   \nabla_y W+ (\rho_\epsilon^2 - (\rho^\delta (x_0))^2) W (1-|W|^2).
\end{equation}

In this section, we estimate the size of the error ${\mathbb S} [W]$. The first term in (\ref{SW1}) is expanded as

 \begin{equation}
 \label{4.8}
 2\epsilon \nabla_x \log \rho_\epsilon \nabla W= 2\epsilon \nabla_x \log \rho_\epsilon \nabla S e^{i \varphi} + i2\epsilon \nabla_x \log \rho_\epsilon \nabla \varphi   S e^{i \varphi}.
 \end{equation}

 For the first and second terms in (\ref{4.8}), we make use of the decaying estimate (\ref{Svardecay}) and decaying estimate (\ref{rhophidecay}) . The first term has the following decay estimates
\begin{equation}
\label{34}
2\epsilon |\nabla \log \rho_\epsilon \nabla S|\lesssim \epsilon <y>^{-3}
\end{equation}
while for  the second term gives
\begin{equation}
\label{35}
2\epsilon |\nabla \log \rho_\epsilon \nabla \varphi |\lesssim \epsilon <\epsilon y>^{-2} <y>^{-2} \lesssim \epsilon^{1-\sigma} (1+|y|)^{-2-\sigma }
\end{equation}
where $ \sigma \in (0, 1)$ is any given positive number.

For the second term in (\ref{4.8}), we have
\begin{eqnarray*}
& & 2i \nabla_x \Phi_\epsilon \cdot \nabla_y W \\
 &= & 2i |\nabla \Phi^\delta (x_0)| \vec{e}_2 \cdot \nabla_y W + 2i (\nabla_x \Phi^\epsilon -|\nabla \Phi^\delta (x_0)|  e_2) \nabla W \\
& = & 2i |\nabla \Phi^\delta (x_0)|  \vec{e}_2 \cdot \nabla_y W + 2i (\nabla_x \Phi^\epsilon -|\nabla \Phi^\delta (x_0)|  e_2) \nabla S e^{i\varphi } - 2(\nabla_x \phi_\epsilon -|\nabla \Phi^\delta (x_0)|  \vec{e}_2) \nabla \varphi S.
\end{eqnarray*}

By the decaying estimates in (\ref{Svardecay}) we get that
\begin{equation}
\label{error1}
{\mathbb S} [W] =  {\mathcal O} (\epsilon^{1-\sigma} <y>^{-2-\sigma}) + 2i (\nabla_x \Phi_\epsilon -|\nabla \Phi^\delta (x_0)|  e_2) \nabla S e^{i\varphi } - 2(\nabla_x \Phi_\epsilon -|\nabla \Phi^\delta (x_0)|  \vec{e}_2) \nabla \varphi S   e^{i \varphi}.
 \end{equation}

 We note that for $ |y| >>1$
 \begin{equation}
 \label{error21}
 \frac{{\mathbb S} [W] }{iW}= {\mathcal O} (\epsilon^{1-\sigma} <y>^{-2-\sigma}) + 2 (\nabla_x \Phi_\epsilon -|\nabla \Phi^\delta (x_0)| e_2) \nabla \log S + 2 i(\nabla_x \Phi_\epsilon -|\nabla \Phi^\delta (x_0)|  \vec{e}_2) \nabla \varphi
 \end{equation}

Using (\ref{Svardecay}) again we obtain the following basic error estimates
\begin{equation}
 \label{error3}
Re ( \frac{{\mathbb S} [W] }{i W})={\mathcal  O}(\epsilon^{1-\sigma } <y>^{-2-\sigma}), \ Im ( \frac{{\mathbb S} [W] }{ i W})= O(\epsilon^{1-\sigma} <y>^{-1-\sigma}).
 \end{equation}

Concerning the boundary behavior, we can strengthen the boundary and  use the fact that $ \frac{\partial W}{\partial y_1} (0, y_2)=0$ as well as the decaying (\ref{Svardecay}) to deduce that
\begin{equation}
\label{bdry}
Re( \frac{1}{iW} \frac{\partial W}{\partial \nu})= {\mathcal O} (\epsilon^\sigma <y>^{-1-\sigma} ),  \ Im ( \frac{1}{iW} \frac{\partial W}{\partial \nu})= {\mathcal O} (\epsilon^\sigma <y>^{-2-\sigma} ) \ \mbox{on} \ \partial \Omega_{\epsilon, x_0}.
\end{equation}

\subsection{Equations in operator form}

We look for solutions of (\ref{4.1}) in the form of a small perturbation of $W$. Let $\eta$ be a  smooth cut-off function such that $\eta (y)=1$ for $ |y| >R$ and $ \eta (y)=0$ for $|y|>2 R$. (Here $R$ is a large and fixed constant.)  As in \cite{DKM, DWZ} or \cite{lw-cpam} we look for solutions of (\ref{4.1}) of the form
\begin{equation}
v= \eta (W+ i W \psi) + (1-\eta) W e^{i \psi}
\end{equation}
where $ W \psi$ is small. We write $\psi=\psi_1+i \psi_2$ with $\psi_1$ and $\psi_2$ real-valued.

Set
$$
v= W +\phi, \ \ \ \  \phi= \eta i W \psi + (1-\eta) W (e^{i \psi} -1).
$$
For $ |y| >2R$,  the equation for $\phi$ becomes
\begin{equation}
{\mathbb L}_{\epsilon} [\phi] + {\mathbb N}_\epsilon  [\phi] = -{\mathbb S} [W]
\end{equation}
where
$$ {\mathbb L}_{\epsilon} ={\mathbb L}_{\epsilon,1} +{\mathbb L}_{\epsilon,2},$$
$$ {\mathbb L}_{\epsilon,1}=\Delta \phi +  \rho_\epsilon^2  ( (1-|W|^2)\phi -2 W \cdot \phi W) + 2i \rho_\epsilon^2 \nabla_x \Phi_\epsilon (x_0) \nabla \phi,
$$
$$
{\mathbb L}_{\epsilon, 2}= 2 \epsilon \nabla_x \rho_\epsilon \nabla \phi + 2 i (\nabla_x \Phi_\epsilon (x_0+\epsilon y) -\nabla_x \Phi_\epsilon (x_0)) \nabla \phi,
$$
$$
{\mathbb N}_\epsilon [\phi]=   \rho_\epsilon^2 [(1- |W+\phi|^2)( W+\phi)- (1- |W|^2)W - (1-|W|^2)\phi +2 W \cdot \phi W]
$$

For $ |y| >2R$, we have
\[ v= W e^{i \psi}\]
and thus by simple computations we get
\begin{eqnarray*}
  \frac{ {\mathbb S} [W e^{i \psi}]}{i W e^{i \psi}} &= &  \Delta \psi -2i \rho_\epsilon^2 |W|^2 \psi_2 + \frac{1}{W} \nabla W \cdot \nabla \psi  \\
  & & + 2 \epsilon \nabla_x \log \rho_\epsilon \cdot \nabla \psi +2 i \nabla_x \Phi_\epsilon \cdot \nabla \psi  \\
 & & + i \nabla \psi \cdot \nabla \psi  +i \rho_\epsilon^2 |W|^2 ( e^{-2\psi_2} -1+2\psi_2)  \\
& &  +\frac{ {\mathbb S} [W] }{ iW e^{i\psi}}
\end{eqnarray*}
which in terms of $\psi$ can be written as 
\begin{equation}
\label{L0}
\tilde{L}_{\epsilon, 1} [\psi] + \tilde{L}_{\epsilon, 2} [\psi]+ \tilde{N}_{\epsilon}  [\psi]=\tilde{E}
\end{equation}
where
\begin{eqnarray*}
 \tilde{L}_{\epsilon, 1} [\psi] &=& \Delta \psi -2i \rho_\epsilon^2 |W|^2 \psi_2 + \frac{1}{W} \nabla W \cdot \nabla \psi + 2i \nabla_x \Phi_\epsilon (x_0) \cdot \nabla \psi,
\\
 \tilde{L}_{\epsilon, 2}  [\psi] &= &    2i (\nabla_x \Phi_\epsilon (x) -\nabla_x \Phi_\epsilon (x_0)) \cdot \nabla \psi+ 2 \epsilon \nabla_x \log \rho_\epsilon (x) \cdot \nabla \psi,
\\
 \tilde{N}_\epsilon  [\psi] &=&  i \nabla \psi \cdot \nabla \psi  +i \rho^2 |W|^2 ( e^{-2\psi_2} -1+2\psi_2), \\
 \tilde{E} &=& -\frac{{\mathbb  S} [W] }{i We^{i\psi} }.
\end{eqnarray*}

Recalling that $\psi=\psi_1+i \psi_2$ and $ x= x_0 +\epsilon  y$, we have
$$  \frac{1}{W} \nabla W \cdot \nabla \psi= (\frac{1}{S} \nabla S+\frac{1}{S} i \nabla \varphi) \cdot \nabla \psi = {\mathcal O}( <y>^{-3} |\nabla \psi|) + i {\mathcal O}( <y>^{-2} |\nabla \psi|). $$

Hence
\begin{equation}
\label{L01}
\tilde{L}_{\epsilon, 1} [\psi]= \begin{pmatrix}
  \Delta \psi_1 - 2 \nabla_x \Phi_\epsilon (x_0) \cdot \nabla \psi_2+ O(<y>^{-2} ) |\nabla \psi|)  \\
\Delta \psi_2 -2\rho_\epsilon^2 |W|^2  \psi_2 + 2 \nabla_x \Phi_\epsilon (x_0) \cdot \nabla \psi_1  + O(<y>^{-2})|\nabla \psi|
\end{pmatrix}
\end{equation}
\begin{equation}
\label{N1n}
\tilde{L}_{\epsilon,2} [\psi]=\begin{pmatrix}
   O( \nabla_x \phi_\epsilon (x) \cdot \nabla \psi_2) + O(\epsilon \nabla_x \log \rho_\epsilon (x) \cdot \nabla \psi_1)
\\
  O( \nabla_x \phi_\epsilon (x) \cdot \nabla \psi_1) + O(\epsilon \nabla_x \log \rho_\epsilon (x) \cdot \nabla \psi_2)
\end{pmatrix}
\end{equation}
\begin{equation}
\label{N2}
\tilde{N}_\epsilon  [\psi]=\begin{pmatrix}
  -2 \nabla \psi_1 \cdot \nabla \psi_2
\\
  |\nabla \psi_1|^2-|\nabla \psi_2|^2 + O( e^{-2\psi_2} -1+2\psi_2 )
\end{pmatrix}
\end{equation}

$ \tilde{L}_{\epsilon, 2}$ contains {\em linear} terms which will be shown to be higher order, while $\tilde{N}_\epsilon $ contains {\em nonlinear} terms in $\psi$.
Let us remark that the explicit form of all the linear and nonlinear terms will be very useful for later analysis.

Finally equation (\ref{L0})  has to be solved with the following boundary condition
\begin{equation}
\label{vbc}
\frac{\partial \psi }{\partial \nu}=-\frac{1}{iW} \frac{\partial W}{\partial \nu } = {\mathcal O} (\epsilon^\sigma <y>^{-1-\sigma} )  + i {\mathcal O} (\epsilon^\sigma <y>^{-2-\sigma}) \ \ \mbox{on} \ \Omega_{\epsilon, x_0}.
\end{equation}

\subsection{Weighted norms and error estimates}

We first introduce some norms. Let us fix two small positive numbers $ 0< \gamma <1,  0<\sigma <1$. Recall that $ \phi= iW \psi, \psi= \psi_1+ i \psi_2$. Let $R$ be a fixed but large number so that $ |W| \geq \frac{1}{2}$ for $|y|>R$. Now we define the following norms for (complex) functions $ \psi, h \in L^\infty (\R^2 \backslash \Omega_{\epsilon, x_0}), g \in L^\infty (\partial \Omega_{\epsilon, x_0})$,
\begin{equation}
\| \phi \|_{*}= \| \phi \|_{C^{2, \gamma} ( \R^2 \backslash \Omega_{\epsilon, x_0} \cap \{ |y| <R \}) } +
\end{equation}
\[ +  \Bigg[ \|  \psi_1 \|_{L^\infty ( \R^2 \backslash \Omega_{\epsilon, x_0} \cap \{ |y| >R \} )} +\| <y>^{1+\sigma} \nabla \psi_1 \|_{L^\infty (\R^2 \backslash \Omega_{\epsilon, x_0} \cap \{ |y| >R \}  )} \Bigg]
\]
\[ + \Bigg[ \| <y>^{1+\sigma} \psi_2 \|_{L^\infty (\R^2 \backslash \Omega_{\epsilon, x_0} \cap \{ |y| >R \}  )} +\| <y>^{2+\sigma} \nabla \psi_2 \|_{L^\infty (\R^2 \backslash \Omega_{\epsilon, x_0} \cap \{ |y| >R \}  )} \Bigg],
\]
$$
\| h\|_{**,i}=  \| iW h \|_{C^{0, \gamma} (\R^2 \backslash \Omega_{\epsilon, x_0} \cap \{ |y| >R \} )} + [\| <y>^{2+\sigma} h_1\|_{L^\infty (|y| >R)} + \| <y>^{1+\sigma} h_2 \|_{L^\infty (\R^2 \backslash \Omega_{\epsilon, x_0} \cap \{ |y| >R \} )} ],
$$
$$
\| g\|_{**,b}=   [\| <y>^{2+\sigma} g_1\|_{L^\infty (\partial (\R^2 \backslash \Omega_{\epsilon, x_0}) )} + \| <y>^{1+\sigma} g_2 \|_{L^\infty (\partial (\R^2 \backslash \Omega_{\epsilon, x_0} ) )} ],
$$
\begin{equation}
\| (h, g)\|_{**}= \| h \|_{**,i}+ \| g\|_{**,b}.
\end{equation}

The forms of these norms are motivated by the expressions of (\ref{N1n})-(\ref{N2}). We refer to \cite{DKM, DWZ} for similar definitions.

Under these norms, from (\ref{error3}) and (\ref{bdry}), we can easily derive  the following:
\begin{equation}
\label{error5}
 \| \frac{{\mathbb S} [W] }{iW} \|_{**,i } + \| \frac{\partial W}{\partial \nu} \|_{**,b} \lesssim \epsilon^{\sigma}.
\end{equation}

For the term $\tilde{L}_{\epsilon, 2}$,  we note that $ \epsilon <y> \leq <x> $ and hence we obtain
\begin{equation}
\label{N1m}
\| \tilde{L}_{\epsilon, 2}\|_{**, i}  \lesssim \max |\nabla \Phi_\epsilon (x)| \| \phi\|_{*}  + \max |\nabla \rho_\epsilon|  \| \phi\|_{*}.
\end{equation}

By the estimates of $\rho_\epsilon $ (see (\ref{barrho})) and Theorem \ref{t1}, we see that $\nabla \Phi_\epsilon \to \nabla \Phi^{\delta}, \nabla \rho^\epsilon \to \nabla \rho^\delta = \nabla (1-|\nabla \Phi^\delta |^2)$. Thus it holds
\begin{equation}
|\nabla \Phi_\epsilon|+ |\nabla \rho_\epsilon|  \lesssim  \delta.
\end{equation}
This gives that
\begin{equation}
\label{N1-new}
\| \tilde{L}_{\epsilon, 2}\|_{**, i}  \lesssim  \delta \| \phi\|_{*}.
\end{equation}

For the nonlinear term $ \tilde{N}_\epsilon$, we have
\begin{equation}
\label{N2e}
\| \tilde{N}_\epsilon \|_{**, i} \lesssim \| \phi\|_{*}^{1+\sigma}.
\end{equation}

\subsection{Projected linear and nonlinear problems}

We define
\begin{equation}
\label{Zd}
Z_j:=\frac{ z_j}{1+|y|^4 },  j=0, 1,
\end{equation}
where
$$
z_0= i W,\  z_1= \frac{\partial W}{\partial y_2}.
$$

In this section, our aim is to solve the following {\em projected problem}:
\begin{equation}
\label{Po1}
\left\{\begin{array}{l}
{\mathbb S} [W+ \phi]=\lambda_0 Z_0 + \lambda_1 Z_1 \ \mbox{in} \ \R^2 \backslash \Omega_{\epsilon, x_0},
 \\
 \frac{\partial \phi}{\partial \nu }=-\frac{\partial W}{\partial \nu}  \ \mbox{on} \ \partial \Omega_{\epsilon, x_0},
 \\
Re(\int_{\R^2 \backslash \Omega_{\epsilon, x_0} } \bar{\phi} Z_j)=0, j=0, 1,
\end{array}
\right.
\end{equation}
where $ {\mathbb S} [W+\phi]$ is the solution operator  defined by (\ref{2.00}).

To this end, we first need to consider the following  linear problem
\begin{equation}
\label{linear}
\left\{\begin{array}{l}
\tilde{L}_{\epsilon, 1} [\psi ] + \tilde{L}_{\epsilon, 2} [\psi] = h +\frac{1}{ iW} \lambda_0 Z_0 +\frac{1}{iW} \lambda_1 Z_1 \ \ \mbox{in} \ \R^2 \backslash \Omega_{\epsilon, x_0},\\
\frac{\partial \phi}{\partial \nu }=g \ \mbox{on} \ \partial \Omega_{\epsilon, x_0},
 \\
 \mbox{Re} (\int_{ \R^2\backslash \Omega_{\epsilon, x_0} } \bar{\phi} Z_j)=0, j=0,1.
\end{array}
\right.
\end{equation}

We have the following key  {\em a priori estimates}.
\begin{lemma}
\label{5.1}
There exists a constant $C$, depending on $\gamma, \sigma$ only such that for all $\epsilon$ sufficiently small,  and any solution of (\ref{linear}), it holds
\begin{equation}
\| \phi \|_{*} \leq C (\| h \|_{**, i}+ \| g\|_{**, b}).
\end{equation}
\end{lemma}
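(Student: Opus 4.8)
The plan is to establish the a priori estimate in Lemma~\ref{5.1} by a contradiction-and-blow-up argument, which is the standard device for linear problems of Ginzburg--Landau type with a scale-invariant limit profile. Suppose the estimate fails: then there exist sequences $\epsilon_n \to 0$, data $(h_n, g_n)$ with $\|h_n\|_{**,i} + \|g_n\|_{**,b} \to 0$, Lagrange multipliers $\lambda_{0,n}, \lambda_{1,n}$, and solutions $\psi_n$ (equivalently $\phi_n = iW\psi_n$) of \eqref{linear} normalized so that $\|\phi_n\|_* = 1$. The first task is to show that the multipliers are controlled, i.e. $|\lambda_{0,n}| + |\lambda_{1,n}| \lesssim \|h_n\|_{**,i} + \|g_n\|_{**,b} + o(1)$. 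This follows by testing the equation against $Z_0$ and $Z_1$: pairing \eqref{linear} with $\bar{Z_j}$ and integrating, the orthogonality conditions $\mathrm{Re}(\int \bar\phi_n Z_j) = 0$ together with the fact that $z_0 = iU_c$ and $z_1 = \partial_{y_2}U_c$ span the kernel of the limiting operator $\mathbb{L}_0$ (Theorem~\ref{t3.1}) give a nearly diagonal $2\times2$ linear system for $(\lambda_{0,n},\lambda_{1,n})$ whose right-hand side is $o(1)$. The perturbation term $\tilde{L}_{\epsilon,2}$ contributes only $O(\delta)\|\phi_n\|_*$ by \eqref{N1-new}, which is harmless once $\delta$ is taken small.

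Next I would carry out the blow-up analysis in two regions, reflecting the two-part structure of the $\|\cdot\|_*$ norm. In the inner region $\{|y| < R'\}$ for any fixed $R'$, elliptic Schauder estimates applied to $\tilde{L}_{\epsilon,1}$ (whose coefficients converge to those of $\mathbb{L}_0$ as $\epsilon_n \to 0$, since $\nabla_x\Phi_\epsilon(x_0) \to \nabla\Phi^\delta(x_0)$ and $\rho_\epsilon \to \rho^\delta(x_0)$) show $\phi_n \to \phi_\infty$ in $C^2_{loc}$ along a subsequence. Because the domain $\Omega_{\epsilon_n,x_0}$ expands to the half-plane $\R^2_+$ and the Neumann datum $g_n \to 0$, the limit $\phi_\infty$ solves exactly $\mathbb{L}_0[\phi_\infty] = 0$ in $\R^2_+$ with $\partial_{y_1}\phi_\infty = 0$ on $\partial\R^2_+$ and $\|\phi_\infty\|_{L^\infty} < \infty$. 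By the nondegeneracy (Key Assumption / Theorem~\ref{t3.1}), $\phi_\infty = \beta_1(iU_c) + \beta_2 \partial_{y_2}U_c$; but the orthogonality conditions pass to the limit and force $\beta_1 = \beta_2 = 0$, so $\phi_\infty \equiv 0$. Hence the inner part of the norm tends to zero.

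The main obstacle is the outer region $\{|y| > R\}$, where no compactness is available and one must instead produce a genuine quantitative decay estimate matching the weights $\langle y\rangle^{1+\sigma}$ and $\langle y\rangle^{2+\sigma}$ built into the norm. Here I would decouple the system \eqref{L01} into its real and imaginary parts: $\psi_{2,n}$ satisfies an equation with the good zeroth-order term $-2\rho_\epsilon^2|W|^2\psi_{2,n}$ (essentially a massive, coercive operator since $|W| \geq \tfrac12$), so a barrier/maximum-principle argument with comparison function $\langle y\rangle^{-1-\sigma}$ yields the decay of $\psi_{2,n}$ and, after differentiating, of $\nabla\psi_{2,n}$. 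The component $\psi_{1,n}$ is harder because its equation $\Delta\psi_{1,n} = 2\nabla_x\Phi_\epsilon(x_0)\cdot\nabla\psi_{2,n} + \cdots$ has no zeroth-order control and behaves like a Laplacian (a phase variable); one recovers decay of $\nabla\psi_{1,n}$ at rate $\langle y\rangle^{-1-\sigma}$ by treating the right-hand side, now known to decay faster, via the representation/Green's function for the convection-diffusion operator and a suitable comparison function, while $\psi_{1,n}$ itself is only bounded (as the norm reflects). Combining the vanishing inner part, the decaying outer part, and the controlled multipliers contradicts $\|\phi_n\|_* = 1$, completing the proof. The delicate points to watch are the uniform coercivity needed for the $\psi_2$ barrier and ensuring the cross-terms $O(\langle y\rangle^{-2})|\nabla\psi|$ in \eqref{L01} and the $\tilde{L}_{\epsilon,2}$ terms are genuinely subordinate to the chosen weights.
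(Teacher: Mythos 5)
Your proposal is correct and takes essentially the same route as the paper's proof: a contradiction/blow-up argument with the multipliers controlled by testing against the approximate kernels, an inner estimate via local compactness plus the nondegeneracy (Key Assumption) and orthogonality conditions, and outer estimates obtained by decoupling the system --- a coercive $\Delta-1$ type maximum-principle/barrier bound for $\psi_2$ and an exterior-Laplacian estimate (bounded $\psi_1$, weighted decay of $\nabla\psi_1$) for $\psi_1$ --- with the cross-coupling absorbed using the smallness of $\delta$. The only cosmetic difference is that you invoke a Green's function representation where the paper cites the corresponding linear estimate from \cite{DKW-jdg}.
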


\begin{proof} We proceed similar to the proof of Lemma 5.1 in \cite{lw-cpam}. See also similar arguments in \cite{DMK, DWZ}.  The key difference is now that we relax the decay of $\psi_1$ to be bounded only and that we don't have the (odd) symmetry assumption. To overcome this difficulty we use some ideas from \cite{DKW-jdg}.

  We prove it by contradiction. Suppose that there exist a sequence of $\epsilon= \epsilon_n \to 0$, constants $ \lambda_0^n, \lambda_1^n$, and functions $\phi^n, h_n, g_n$ which satisfy (\ref{linear}) with
\begin{equation}
\label{511}
\| \phi^n \|_{*}=1, \| h_n \|_{**,i} =o(1), \| g_n \|_{**, b} =o(1).
\end{equation}

We first note  that $ \lambda_0^n, \lambda_1^n= o(1)$. This follows by multiplying the equation (\ref{linear}) with $(iW) z_0, (iW) z_1$ and integrating by parts. See Lemma 6.2 of \cite{DKW-jdg}.

Next we derive {\em inner estimates} first.  Let $R>0$ be fixed and large.  We claim that $\|\phi^n \|_{L^\infty (B_{4R})} =o(1)$. In fact suppose not. Then by a limiting process we obtain a solution to the linear equation ${\mathbb L}_0$ in $\R^2_{+}$. Thanks to the {\em Key Assumption} (\ref{ker12}), the kernels of ${\mathbb L}_0$ consist of linear combinations of $ z_0$ and $z_1$ only.  But the limit of $\phi^n$ is exactly  orthogonal to the approximate kernels. This is impossible. (This argument is standard now so we omit the details. See \cite{DKM, DWZ, lw-cpam}.)

Next we shall derive {\em outer estimates}: this is the more technical part. (To avoid the clumsy notation we drop the dependence on $n$.) We follow the proof in Section 6 of \cite{DKW-jdg}.
 For $|y|>2R $ the system becomes (see (\ref{L01}))
  \begin{equation}
  \label{51}
 \left\{\begin{array}{l}
   \Delta \psi_1 - 2 \nabla_x \Phi_\epsilon (x_0) \cdot \nabla \psi_2+ {\mathcal O} (<y>^{-2} ) |\nabla \psi|) = f_1, \ \mbox{in} \ \R^2 \backslash \Omega_{\epsilon, x_0} \cap \{ |y|>2R\}, \\
\Delta \psi_2 -2\rho_\epsilon^2 |W|^2  \psi_2 + 2 \nabla_x \Phi_\epsilon (x_0) \cdot \nabla \psi_1  + {\mathcal O} (<y>^{-2})|\nabla \psi|= f_2,  \ \mbox{in} \ \R^2 \backslash \Omega_{\epsilon, x_0},\\
\psi_1= o(1), \psi_2=o(1) \ \mbox{on} \ \R^2 \backslash \Omega_{\epsilon, x_0} \cap \{ |y|=2R\}, \\
\frac{\partial \psi_1}{\partial \nu}= o( <y>^{1+\sigma}), \frac{\partial \psi_2}{\partial \nu}= o( <y>^{2+\sigma}), \ \mbox{on}\ \partial \Omega_{\epsilon, x_0} \cap \{ |y|>2R\}.
  \end{array}
  \right.
  \end{equation}
  where $f_1, f_2$ contain all remaining error terms which are of the order $o( <y>^{2+\sigma}), o(<y>^{1+\sigma})$ respectively.

The first two terms  in the  second equation in (\ref{51}) behavior like $ \Delta -1$, which implies by Maximum Principle
\begin{eqnarray*}
& \ &  \| <y>^{1+\sigma} \psi_2 \|_{L^\infty (|y|>2 R)} +\| <y>^{2+\sigma} \nabla \psi_2 \|_{L^\infty (|y|>2 R)} \\
 & \lesssim &  \|  <y>^{1+\sigma}  \nabla_x \Phi_\epsilon (x_0) \cdot \nabla \psi_1 \| + o(1) \lesssim \max |\nabla_x \Phi_\epsilon (x_0)| \| \psi \|_{*}  + o(1) \lesssim \delta + o(1).
 \end{eqnarray*}
 It remains to consider the first equation in (\ref{51}) only. Similar to  the estimates of Lemma 6.1 of \cite{DKW-jdg}, for linear equation
$$ \left\{\begin{array}{l}
-\Delta \psi = f \ \mbox{in} \ \R^2 \backslash \Omega_{\epsilon, x_0} \cap \{ |y|>2R\}\\
 \psi= o(1) \ \mbox{on} \ \R^2 \backslash \Omega_{\epsilon, x_0} \cap \{ |y|=2R\}, \frac{\partial \psi}{\partial \nu}= g\ \mbox{on}\ \partial \Omega_{\epsilon, x_0} \cap \{ |y|>2R\}
 \end{array}
 \right.$$
it holds that
$$ \| \psi \|_{L^\infty (|y|>2 R)} +\| <y>^{1+\sigma} \nabla \psi \|_{L^\infty (|y|>2 R)}  \lesssim  ( \|  <y>^{2+\sigma} f \|_{L^\infty} +\|  <y>^{1+\sigma} g \|_{L^\infty}  +\| \psi \|_{L^\infty (R <|y| < 3 R)} ). $$

In the first equation in (\ref{51}) we consider the linear term $ 2 \nabla_x \Phi_\epsilon (x_0) \cdot \nabla \psi_2$ as a perturbation, since $ |\nabla_x \Phi_\epsilon | \lesssim \delta$. Therefore we get
$$  \| \psi_1 \|_{L^\infty (|y|>2 R)} +\| <y>^{1+\sigma} \nabla \psi_1 \|_{L^\infty (|y|>2 R)}  \lesssim  ( \|  <y>^{2+\sigma} f_1 \|_{L^\infty} + \| \psi_1 \|_{L^\infty (R_0 <|y| < 3 R)} +\delta \| <y>^{2+\sigma} \nabla \psi_2 \|_{|y|>2R }$$
where the last term can be bounded by $\delta \| \phi\|_{*}$.

All together  we obtain the following outer estimates
$$ \| \psi_1 \|_{L^\infty (|y|>2 R)} +\| <y>^{1+\sigma} \nabla \psi_1 \|_{L^\infty (|y|>2 R)} +  \| <y>^{1+\sigma} \psi_2 \|_{L^\infty (|y|>2 R)} = o(1). $$

Combining both inner and outer estimates, we obtain that $ \|\phi\|_{*}=o(1)$, a contradiction to (\ref{511}).

\end{proof}

\begin{remark}
The condition that $\delta$ is small is only used in the outer estimate argument. We believe that this is just a technical condition.
\end{remark}

We consider now the following projected linear problem
\begin{equation}
\label{linear2}
\left\{\begin{array}{l}
\tilde{L}_{\epsilon, 1} [\psi ] + \tilde{L}_{\epsilon, 2} [\psi]= h +  \sum_{j=0}^1 \lambda_j \frac{Z_j}{i W} \ \ \mbox{in} \ \R^2\backslash \Omega_{\epsilon, x_0},\\
 \mbox{Re} (\int_{ \R^2 \backslash \Omega_{\epsilon, x_0} } \bar{\phi} \frac{\partial W }{\partial y_2})=0, \\
 \frac{\partial \phi }{\partial \nu}=g \ \mbox{on} \ \Omega_{\epsilon, x_0}.
\end{array}
\right.
\end{equation}

We state the following existence result for the projected linear problem.
\begin{proposition}
\label{p4.1}
There exists $\epsilon_0$ such that for all $\epsilon<\epsilon_0$, the following holds: if $ \| (h, g) \|_{**} <+\infty$,  then there exists a  unique solution $\phi= T_\epsilon (h, g)$  to (\ref{linear2}). Furthermore it holds that
\begin{equation}
\| T_\epsilon (h, g) \|_{*} \lesssim \| h \|_{**,i}+\| g\|_{**, b}
\end{equation}
\end{proposition}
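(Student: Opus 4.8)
The plan is to obtain Proposition \ref{p4.1} from the a priori estimate of Lemma \ref{5.1} by a bounded-domain/Fredholm argument, recovering the weighted pointwise bounds at the end by elliptic regularity. The uniqueness statement and the estimate $\|T_\epsilon(h,g)\|_*\lesssim\|h\|_{**,i}+\|g\|_{**,b}$ should be essentially immediate consequences of Lemma \ref{5.1}: if two solutions of (\ref{linear2}) share the same data $(h,g)$, their difference solves the homogeneous problem ($h=0$, $g=0$), and Lemma \ref{5.1}, once the residual gauge direction $z_0=iW$ has been fixed (see below), forces the difference and both multipliers to vanish; applying the same estimate to the solution itself yields the asserted bound. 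Thus the genuinely new content is \emph{existence}, and that is where the work lies.

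For existence I would first exhaust $\R^2\setminus\Omega_{\epsilon,x_0}$ by the bounded domains $D_R:=(\R^2\setminus\Omega_{\epsilon,x_0})\cap B_R$ and solve, for each large $R$, the projected problem on $D_R$ with $\phi=0$ on $\partial B_R$, the Neumann datum $\partial\phi/\partial\nu=g$ on $\partial\Omega_{\epsilon,x_0}$, and the constraint $\mathrm{Re}\int_{D_R}\bar\phi\,\partial_{y_2}W=0$. After the substitution $\phi=iW\psi$, the operator $\tilde{L}_{\epsilon,1}+\tilde{L}_{\epsilon,2}$ is a perturbation of the decoupled elliptic system $(\Delta\psi_1,(\Delta-2\rho_\epsilon^2|W|^2)\psi_2)$, the coupling $2\nabla_x\Phi_\epsilon(x_0)\cdot\nabla\psi_j$ being of first order and small by $|\nabla_x\Phi_\epsilon|\lesssim\delta$; on the bounded domain this is a compact perturbation of an invertible operator, so it has Fredholm index zero. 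The contradiction scheme of Lemma \ref{5.1} runs verbatim on $D_R$ and gives injectivity of the constrained operator uniformly in $R$, whence the Fredholm alternative produces a unique $(\phi_R,\lambda_0^R,\lambda_1^R)$, the multipliers being read off by testing against $z_0$ and $z_1$ exactly as before (cf. Lemma 6.2 of \cite{DKW-jdg}).

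I would then pass to the limit $R\to\infty$. The decisive point is that Lemma \ref{5.1} supplies a bound $\|\phi_R\|_*\lesssim\|h\|_{**,i}+\|g\|_{**,b}$ uniform in $R$, so a diagonal elliptic-compactness argument extracts a limit $\phi$ solving (\ref{linear2}) on the full exterior domain with the same bound, and the multipliers converge as well. Here the behaviour $\Delta-2\rho_\epsilon^2|W|^2\sim\Delta-1$ of the $\psi_2$-equation at infinity, noted in the proof of Lemma \ref{5.1}, is what propagates the fast weight $<y>^{2+\sigma}$ on $\nabla\psi_2$, while the slower weights for the merely bounded component $\psi_1$ come from the Laplacian estimate of Lemma 6.1 of \cite{DKW-jdg}; the curved-boundary datum $g$ enters only through $\|g\|_{**,b}$. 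Local Schauder estimates finally upgrade this information to the $C^{2,\gamma}$ part of $\|\cdot\|_*$.

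One must account for the fact that (\ref{linear2}) carries two multipliers $\lambda_0,\lambda_1$ but only a single orthogonality condition, against the translation mode $\partial_{y_2}W=z_1$. Because the full equation is gauge invariant under $v\mapsto e^{i\theta}v$, the mode $z_0=iW$ (constant $\psi_1$) lies \emph{exactly} in the kernel of $\tilde{L}_{\epsilon,1}+\tilde{L}_{\epsilon,2}$; I would therefore normalize its component by additionally prescribing $\mathrm{Re}\int\bar\phi\,Z_0=0$, which is admissible precisely because $z_0$ is an exact kernel element, so that Lemma \ref{5.1} in its two-condition form applies directly. The gauge freedom thereby removed is exactly what Step 3 later exploits to conclude $\lambda_0=0$. \textbf{The main obstacle} is this existence step on the unbounded domain: the Fredholm-and-limit scheme has to be carried out in the anisotropic norms $\|\cdot\|_*,\|\cdot\|_{**}$, where $\psi_1$ is only bounded while $\psi_2$ decays, so no single Hilbert-space framework captures both components simultaneously, and the uniform control of $\phi_R$ near the rescaled boundary $\partial\Omega_{\epsilon,x_0}$ with data $g$ must be preserved as $R\to\infty$. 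Everything else is bookkeeping built on Lemma \ref{5.1}.
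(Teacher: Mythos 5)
Your proposal is correct and follows essentially the same route as the paper: solve the projected problem on the bounded domains $(\R^2\setminus\Omega_{\epsilon,x_0})\cap B_M$ with Dirichlet data on $\partial B_M$, obtain uniform bounds from the a priori estimate of Lemma \ref{5.1}, invoke the Fredholm alternative (the paper phrases this via the variational structure in $H^1$, you via a compact perturbation of an invertible operator---the same device), and pass to the limit $M\to\infty$. Your explicit handling of the gauge mode $z_0=iW$, adding the second orthogonality condition $\mathrm{Re}\int\bar\phi\,Z_0=0$, matches what the paper does implicitly in its bounded-domain problem (\ref{linearbd}), where both conditions $j=0,1$ are imposed.
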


\begin{proof}
 The proof is similar to that of [Prop. 4.1,\cite{DKM}]. Instead of solving (\ref{linear2}) in $\R^2\backslash \Omega_{\epsilon, x_0}$, we  solve it in a bounded domain first:
\begin{equation}
\label{linearbd}
\left\{\begin{array}{l}
\tilde{L}_{\epsilon, 1} [\psi ] + \tilde{L}_{\epsilon, 2} [\psi]= h + \sum_{j=0}^1 c_j  \frac{Z_j}{i W} \ \ \mbox{in} \ (\R^2 \backslash \Omega_{\epsilon, x_0}) \cap B_M (0),\\
 \mbox{Re} (\int_{ B_M } \bar{\phi} Z_j )=0, j=0,1,\\
\phi=iW \psi, \frac{\partial \phi}{\partial \nu}=0 \ \mbox{on} \ \partial \Omega_{\epsilon, x_0}, \\
\phi=0 \ \mbox{on} \ \partial B_M (0),
\end{array}
\right.
\end{equation}
where $M >10 R$. By the same proof of a priori estimates, we also obtain the following estimates for any solution $\phi=\phi_M$ of (\ref{linearbd}):
\begin{equation}
\| \phi_M \|_{*} \lesssim \| h \|_{**,i} +\| g\|_{**, b}.
\end{equation}

By working with the Sobolev space $H^1(\R^2 \backslash \Omega_{\epsilon, x_0} \cap B_M (0)) \cap \{ u=0 \ \mbox{on} \  \partial B_M (0) \}$,  the existence then follows from Fredholm alternatives.  (Recall that problem (\ref{4.1}) admits a variational structure
as follows
$$\int \frac{1}{2} \rho_\epsilon^2 |\nabla v + i \Phi_\epsilon \ v|^2 + \frac{1}{4} \rho_\epsilon^4 (1-|v|^4). $$
Here the fact that $ \rho_\epsilon e^{\frac{i \Phi_\epsilon}{\epsilon}}$ satisfies (\ref{2.1}) is used.)

Now letting $M \to +\infty$, we obtain a solution to (\ref{linear2}) with the required properties.

\end{proof}

\subsection{Step 1: projected nonlinear problem}

Finally, we consider the full nonlinear {\em projected problem} (\ref{Po1}). By (\ref{L0}) this is equivalent to
\begin{equation}
\label{nonlinear}
\left\{\begin{array}{l}
\tilde{L}_{\epsilon, 1} [\psi ] + \tilde{L}_{\epsilon, 2} [\psi]+  \tilde{N}_\epsilon [\psi]= \tilde{E} +\lambda_0 \frac{Z_0}{iW} + \lambda_1 \frac{Z_1}{iW} \ \mbox{in} \ \R^2 \backslash \Omega_{\epsilon, x_0},
 \\
 \frac{\partial \phi}{\partial \nu }=-\frac{\partial W}{\partial \nu}  \ \mbox{on} \ \partial \Omega_{\epsilon, x_0},
 \\
Re(\int_{\R^2 \backslash \Omega_{\epsilon, x_0} } \bar{\phi} Z_j)=0, j=0, 1.
\end{array}
\right.
\end{equation}

Using the operator $T_\epsilon$ defined by Proposition (\ref{p4.1}), we can write (\ref{nonlinear}) as
\begin{equation}
\label{psiT}
\psi =  T_\epsilon \circ (-\epsilon \tilde{N}_\epsilon  [\psi]+ \tilde{E} )
\end{equation}
which is equivalent to
\begin{equation}
\psi =  G_\epsilon [\psi]
\end{equation}
where $G_\epsilon$ is the nonlinear operator at the right hand  side of (\ref{psiT}).

Using the error estimate (\ref{error5}) we see that
\begin{equation}
\label{Ee}
 \| \tilde{E} \|_{**, i} \leq C \epsilon^{\sigma}.
\end{equation}

Assuming that
\begin{equation}
\phi \in {\mathbb B}= \{  \| \phi\|_{*} < C \epsilon^{\sigma} \}
\end{equation}
 then we have, using the explicit form of $\tilde{N}_\epsilon [\psi]$ at (\ref{N1n}):
 \begin{equation}
 \label{N1}
 \|\tilde{ N}_\epsilon [\psi]\|_{**, i}= \| <y>^{2+\sigma} \tilde{N}_{\epsilon,1} [\psi]\|_{L^\infty ( |y| >2)} + \| <y>^{1+\sigma} \tilde{N}_{\epsilon,2} [\psi]\|_{L^\infty (|y| >2)}
 \end{equation}
where $ \tilde{N}_\epsilon= \tilde{N}_{\epsilon,1} + i \tilde{N}_{\epsilon,2}$.
Now since $ i \frac{\partial \psi}{\partial x_2}= -\frac{\partial \psi_2}{\partial x_2}+ i \frac{\partial \psi_2}{\partial x_1}$ and
\[ \| i \frac{\partial \psi}{\partial x_2} \|_{**, i} \leq \| <y>^{2+\sigma} \frac{\partial \psi_1}{\partial x_2} \|_{L^\infty (|y|>2)} +\| <y>^{1+\sigma} \frac{\partial \psi_1}{\partial x_2} \|_{L^\infty ( |y| >2)} \leq C \|\psi\|_{*},
\]
 we obtain that
\begin{equation}
\| G_\epsilon [\phi]\|_{*} \leq C (\|\tilde{N}_{\epsilon} [\psi]\|_{**, i} +\|\tilde{E} \|_{**, i}) \leq C \epsilon^{1-\sigma}.
\end{equation}
Similarly, we can also show that
\begin{equation}
\| G_\epsilon [\phi^{'}] -G_{\epsilon} [\phi^{''}]\|_{*} \leq o(1) \| \phi^{'} -\phi^{''} \|_{*}
\end{equation}
for all $\phi^{'}, \phi^{''} \in {\mathbb B}$.

By contraction mapping theorem, we conclude that
\begin{proposition}
\label{p5.1}
There exists a constant $C$, depending on $\gamma, \sigma$ only such that for all $\epsilon$ sufficiently small and $x_0 \in \partial \Omega$ the following holds: there exists a  unique solution $\phi_{\epsilon, x_0}$  to (\ref{nonlinear}) and $ \phi_{\epsilon, x_0}$ satisfies
\begin{equation}
\| \phi_{\epsilon, x_0} \|_{*} \leq C \epsilon^{1-\sigma}.
\end{equation}
Furthermore, $ \phi_{\epsilon, x_0}$ is $C^1$ in $x_0$.

\end{proposition}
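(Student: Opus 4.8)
The plan is to solve the projected nonlinear problem (\ref{nonlinear}) by the Banach contraction principle in the weighted norms, since the linear theory and all the relevant error and nonlinear bounds are already available; the genuinely new point will be the smooth dependence on the base point $x_0$.

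First I would invert the linear part. By Proposition \ref{p4.1} the operator $T_\epsilon$ solves the linear projected problem (\ref{linear2}) with $\| T_\epsilon(h,g)\|_* \lesssim \|h\|_{**,i} + \|g\|_{**,b}$ and simultaneously selects the two Lagrange multipliers $\lambda_0,\lambda_1$ enforcing the orthogonality conditions. Composing $T_\epsilon$ with the error $\tilde E$, the nonlinearity $\tilde N_\epsilon[\psi]$, and the boundary data $-\partial W/\partial\nu$ recasts (\ref{nonlinear}) as the fixed-point equation $\psi = G_\epsilon[\psi]$ of (\ref{psiT}); a fixed point of $G_\epsilon$ is exactly a solution of the projected problem.

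Second, I would run the contraction in a ball ${\mathbb B} = \{\|\phi\|_* \le C\epsilon^{1-\sigma}\}$ whose radius is dictated by the error. Self-mapping follows from the bounded inverse together with the error estimate (\ref{error5}) (equivalently (\ref{Ee})), $\|\tilde E\|_{**,i} + \|\partial W/\partial\nu\|_{**,b} \lesssim \epsilon^\sigma$, and the nonlinear estimate (\ref{N2e}), $\|\tilde N_\epsilon[\psi]\|_{**,i} \lesssim \|\phi\|_*^{1+\sigma}$, which shows the nonlinear contribution is strictly higher order on ${\mathbb B}$ and hence dominated by the error term. A parallel Lipschitz bound for $\tilde N_\epsilon$ on the small ball gives $\|G_\epsilon[\phi'] - G_\epsilon[\phi'']\|_* \le o(1)\,\|\phi'-\phi''\|_*$, so $G_\epsilon$ is a contraction. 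Banach's theorem then produces the unique $\phi_{\epsilon,x_0} \in {\mathbb B}$ with the asserted bound $\|\phi_{\epsilon,x_0}\|_* \le C\epsilon^{1-\sigma}$; uniformity of all constants in $x_0 \in \partial\Omega$ is inherited from the uniformity already built into Proposition \ref{p4.1} and the error estimates.

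The hard part will be the $C^1$ dependence of $\phi_{\epsilon,x_0}$ on $x_0$. The subtlety is that $x_0$ enters not only through the coefficients $\rho_\epsilon,\Phi_\epsilon$ and the approximate profile $W$ (via $|\nabla\Phi^\delta(x_0)|$, the speed $c$, and the anisotropic rescaling $\bar y = \sqrt{1-|\nabla\Phi^\delta(x_0)|^2}\,y$), but also through the \emph{moving domain} $\Omega_{\epsilon,x_0} = \frac1\epsilon(\Omega - \{x_0\})$. My plan is to pull $\Omega_{\epsilon,x_0}$ back to a fixed reference domain by a $C^1$ family of boundary-straightening diffeomorphisms, so that on the fixed domain $x_0$ appears only in smoothly varying coefficients and data and $G_\epsilon$ becomes $C^1$ jointly in $(\psi,x_0)$. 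Because the contraction constant is $o(1)$ uniformly in $x_0$, the operator $\mathrm{Id} - D_\psi G_\epsilon$ is uniformly invertible, and the implicit function theorem yields the $C^1$ dependence together with uniform estimates. Verifying that the straightening, the $x_0$-derivatives of $W$ and of the profile data, and the moving Neumann condition are all controlled in the weighted norms uniformly in $\epsilon$ is the main technical burden.
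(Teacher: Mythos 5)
Your proposal follows essentially the same route as the paper: both recast (\ref{nonlinear}) as the fixed-point equation $\psi = G_\epsilon[\psi]$ via the linear solution operator $T_\epsilon$ of Proposition \ref{p4.1}, then close a Banach contraction in the weighted norm using the error bound (\ref{error5})/(\ref{Ee}) and the nonlinear estimate (\ref{N2e}), with the Lipschitz property giving the $o(1)$ contraction constant. Your treatment of the $C^1$ dependence on $x_0$ (straightening the moving domain $\Omega_{\epsilon,x_0}$ and invoking the implicit function theorem, using the uniform invertibility of $\mathrm{Id}-D_\psi G_\epsilon$) is in fact more detailed than the paper, which asserts this regularity without proof.
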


\subsection{Step 2: $\lambda_1=0$}

We now solve the reduced problem. From Proposition  \ref{p5.1}, we deduce the existence of  a solution $(\phi, \lambda_0, \lambda_1)= (\phi_{\epsilon, x_0}, \lambda_{0, \epsilon, x_0}, \lambda_{1, \epsilon, x_0} ) $ satisfying
\begin{equation}
\label{nonlinear2}
 {\mathbb S} [W+\phi_{\epsilon, x_0}] = {\mathbb L}_\epsilon [\phi_{\epsilon, x_0}]+ {\mathbb N}_\epsilon  [\phi_{\epsilon, x_0}] +{\mathbb S} [W] = \lambda_0 i Z_0 + \lambda_1 Z_1.
\end{equation}

Multiplying (\ref{nonlinear2}) by $ z_1=\frac{\partial W}{\partial y_2}$ and integrating over $\R^2 \backslash \Omega_{\epsilon, x_0}$, we obtain
\begin{eqnarray*}
 \lambda_1  \int_{\R^2 \backslash \Omega_{\epsilon, x_0}}  | \frac{\partial W}{\partial y_2} |^2= \int_{\R^2 \backslash \Omega_{\epsilon, x_0}}  ( {\mathbb L}_\epsilon  [\phi_{\epsilon, x_0}]+ {\mathbb N}_\epsilon [\phi_{\epsilon, x_0}] +{\mathbb S} [W]) \frac{\partial \bar{W}}{\partial y_2}.
\end{eqnarray*}

We concentrate on the last integral $\int_{\R^2 \backslash \Omega_{\epsilon, x_0}}  {\mathbb S} [W] \frac{\partial \bar{W}}{\partial y_2}$ (which is the dominating term).  From (\ref{SW1}) we have
 \begin{equation}
 \label{SW}
 {\mathbb S} [W]= 2\epsilon  \nabla_x \log \rho_\epsilon \nabla_y W+ 2 i   (\nabla_x  \Phi_\epsilon - |\nabla \Phi^\delta (x_0)|  e_2) \cdot   \nabla_y W+  (\rho_\epsilon^2 - \rho^\delta (x_0)^2) W (1-|W|^2)
 \end{equation}
We project each term in (\ref{SW})  into $ \frac{\partial W}{\partial y_2}$.  For the first term we obtain
$$ Re (2\int_{\R^2\backslash \Omega_{\epsilon, x_0}} \epsilon  \nabla_x \log \rho_\epsilon \nabla_y W \frac{\partial \bar{W}}{\partial y_2} ) = Re (2\epsilon \nabla_x \log \rho_\epsilon (x_0) \int_{\R^2_{+}}\nabla W \frac{\partial \bar{W}}{\partial y_2} )
$$

\begin{equation}
\label{proj1}
= 2\epsilon \frac{\partial  \log \rho^\delta (x_0)}{\partial \tau_{x_0}}  \int_{\R^2_{+}} |\frac{\partial W}{\partial y_2} |^2 + o(\epsilon)
\end{equation}
since the second term in the expansion of $ \rho_\epsilon$ depends on $ d( x, \partial \Omega)$ only (which is in the normal  direction).

The projection of last term  in (\ref{SW}) gives
\begin{equation}
\label{proj2}
Re (\int_{\R^2\backslash \Omega_{\epsilon, x_0}}   (\rho_\epsilon^2 - \rho^\delta (x_0)^2) W (1-|W|^2) \frac{\partial \bar{W}}{\partial y_2}) = \epsilon  \frac{\partial (\rho^\delta)^2}{\partial \tau_{x_0}}  \int_{\R^2_{+}}  y_2  S (1-S^2) \frac{\partial S}{\partial y_2} +o(\epsilon).
\end{equation}

For the projection to the second term in (\ref{SW}) we compute locally, using the estimate (\ref{320}):
\begin{equation}
 \label{proj3}
 2 Re (i \int_{\R^2\backslash \Omega_{\epsilon, x_0}}  (\nabla_x  \Phi_\epsilon - |\nabla \Phi^\delta (x_0)|  e_2) \cdot   \nabla_y W \frac{\partial \bar{W} }{\partial y_2})=   o(\epsilon)
\end{equation}
by symmetry of $W$. For the remaining terms we have
$$\int_{\R^2\backslash \Omega_{\epsilon, x_0}} ( {\mathbb L}_{\epsilon} [\phi_{\epsilon, x_0}] ) \frac{\partial \bar{W}}{\partial y_2}= \int_{\R^2\backslash \Omega_{\epsilon, x_0}} ( {\mathbb L}_\epsilon  [ \frac{\partial \bar{W}}{\partial y_2}]) \phi_{\epsilon, x_0} +o(\epsilon)= o(\epsilon) $$
$$\int_{\R^2\backslash \Omega_{\epsilon, x_0}} ( {\mathbb N}_\epsilon  [\phi_{\epsilon, x_0}] ) \frac{\partial \bar{W}}{\partial y_2} = {\mathcal O} (\epsilon^{2 (1-\sigma)}) = o(\epsilon).$$

Combining (\ref{proj1})-(\ref{proj3})  we obtain
\begin{equation}
\lambda_1 =A_0 \epsilon \frac{\partial }{\partial \tau_{x_0}} (  |\nabla \Phi^\delta |^2) +(\epsilon^{1+\sigma})
\end{equation}
where
$$ A_0=2\int_{\R^2_{+}} |\frac{\partial W}{\partial y_2} |^2 +\int_{\R^2_{+}}  y_2  S (1-S^2) \frac{\partial S}{\partial y_2} >0$$
since each term is strictly positive.

We claim that $ |\nabla \Phi^\delta | (x_0) \not \equiv C$ on $ \partial \Omega$. In fact if so,  since $ \frac{\partial \Phi^\delta }{\partial \nu}=0$, we obtain that $ \frac{\partial \Phi^\delta }{\partial \tau_{x_0} }= C$. Since $\nabla ( (1-|\nabla \Phi^\delta |^2) \Phi^\delta )=0$, by unique continuation  this is impossible. Since $ |\nabla \Phi^\delta | (x_0)$ is not a constant on $\partial \Omega$,  we see that there are at least two points $ x_1, x_2 \in \partial \Omega$ such that $ \frac{\partial}{\partial \tau_{x_1} } |\nabla \Phi^\delta |^2 (x_1) <0 < \frac{\partial}{\partial \tau_{x_2} }  |\nabla \Phi^\delta |^2 (x_2)$. Now we let $x_0$ vary along the segment between $x_1$ and $x_2$, we obtain at least two positions $x_0$ satisfying  $ \lambda_1=0$. We denote this $x_0$ as $ x_{\epsilon}$ and the corresponding solution $v=W+\phi_{\epsilon, x_\epsilon }$ as $v_{\epsilon}$.

\subsection{Step 3: $\lambda_0=0$}

From Step 2,  we have found  a solution $v_\epsilon$ which satisfies
\begin{equation}
\label{6.1}
 \nabla (\rho_\epsilon^2 \nabla v_\epsilon ) + 2i  \rho_\epsilon^2 \nabla_x \Phi_\epsilon \nabla v_\epsilon + \rho_\epsilon^4 v_\epsilon (1-|v_\epsilon|^2)
=\lambda_0 i \rho_\epsilon^2 \frac{W}{1+|y|^4} \ \mbox{in} \ \R^2 \backslash \Omega_{\epsilon, x_{\epsilon}} , \ \frac{\partial v_\epsilon }{\partial \nu }=0\ \mbox{on} \ \partial \Omega_{\epsilon, x_{\epsilon}}.
\end{equation}

Now we multiply (\ref{6.1}) by $\bar{v}_\epsilon $ (the conjugate of $v_\epsilon$)  and integrate by parts, using the fact that
 $ \nabla (\rho_\epsilon^2 \nabla \phi_\epsilon)=0$, we see that
 $$ \int_{\R^2\backslash \Omega_{\epsilon, x_{\epsilon}} }  \rho_\epsilon^2 |\nabla v_\epsilon |^2 + |v_\epsilon |^2 \rho_\epsilon^4 (1-|v_\epsilon|^2) = \lambda_0 i \int_{\R^2 \backslash \Omega_{\epsilon, x_{\epsilon} } } \rho_\epsilon^2 \frac{ W}{1+|y|^4} \bar{v}_\epsilon. $$
 Taking the imaginary part of the above equation, we obtain that
 $$ \lambda_0 Re (\int_{\R^2 \backslash \Omega_{\epsilon, x_{\epsilon} } } \rho_\epsilon^2 \frac{ W}{1+|y|^4 } \bar{v}_\epsilon )=0. $$
 Since $v_\epsilon \sim  W (1+o(1))$, we deduce that $\lambda_0=0$. Theorem \ref{t2} is thus proved.

 In the above computations, we have used the fact that $ v= W e^{i \psi}$ and $ |\nabla \psi_1| = {\mathcal O} (<y>^{-1}), | \psi_2|= {\mathcal O} (<y>^{-1-\sigma})$ so that the boundary integrals vanish at infinity.

\setcounter{equation}{0}
\section{Dirichlet boundary condition}
We discuss in this section how we can adjust the proofs to deal with the Dirichlet boundary conditions.
We consider the Gross-Pitaevskii  equation with Dirichlet boundary condition
\begin{equation}
\epsilon^2 \Delta u + u (1-|u|^2)=0 \ \mbox{in} \ \R^2 \backslash \Omega, \ \  u=0 \ \mbox{on}\ \partial \Omega.
\end{equation}
We first discuss the existence of vortex free solutions. Same as before we let $ u= \rho e^{\frac{\Phi}{\epsilon}}$. Then we have
\begin{equation}
\epsilon^2 \Delta \rho + \rho  (1-|\nabla \Phi |^2-\rho^2)=0 \ \mbox{in} \ \R^2 \backslash \Omega, \ \  \rho=0 \ \mbox{on}\ \partial \Omega
\end{equation}
\begin{equation}
\nabla (\rho^2 \nabla \Phi)=0 \ \mbox{in} \ \R^2 \backslash \Omega.
\end{equation}
For the boundary conditions of $\Phi$ we impose the usual Neumann boundary condition
$$ \frac{\partial \Phi}{\partial \nu}=0\ \ \ \mbox{on} \ \partial \Omega.$$

 The first ansatz is $W_0= (\rho^\delta, \Phi^\delta)$. Similar to the Neumann boundary condition case, we need to add a boundary layer $\rho_1$:
 \begin{equation}
 \epsilon^2 \Delta \rho_1-2 (\rho^\delta)^2 \rho_1= \epsilon^2 \Delta \rho^\delta \ \mbox{in} \ \R^d \backslash \Omega, \ \rho_1= -\rho^\delta \ \ \mbox{on} \ \Omega.
 \end{equation}
 The remaining proofs are similar to the Neumann boundary condition case. We omit the details.

 To construct the second solution, we need to analyze the behavior of the first solution near the boundary and find the corresponding limiting traveling wave equation.

Let us rescale $ x= x_0 +\epsilon y$ where $ x_0\in \partial \Omega$, $ \hat{\Phi}= \frac{ \Phi}{\epsilon}$. (As before we also assume that $ \nu_{x_0}=\vec{e}_1, \tau_{x_0}= \vec{e}_1.$) Then we have
\begin{equation}
\label{n23}
\left\{\begin{array}{l}
\Delta \rho + \rho  (1-|\nabla \hat{\Phi} |^2-\rho^2)=0 \ \mbox{in} \ \R^2_{+}, \ \  \rho=0 \ \mbox{on}\ \partial \R^2_{+}
\\
\nabla (\rho^2 \nabla \hat{\Phi} )=0 \ \mbox{in} \ \R^2_{+}, \ \ \nabla \hat{\Phi} \to b \vec{e}_2
\end{array}
\right.
\end{equation}
where $ b= |\nabla \Phi^\delta (x_0)|$.

There exists a solution to (\ref{n23}) of the following form
\begin{equation}
\rho = \rho_0 (y_1), \  \hat{\Phi} = b y_2
\end{equation}
where $\rho_0$ is the unique solution of the following ordinary differential equation
$$ \rho_0^{''} + \rho_0 ( 1- b^2 -\rho_0^2)=0, 0<y_1<+\infty, \rho_0 (0)=0, \rho_0 (\infty)= \sqrt{1-b^2}$$
Now the limiting  vortex equation becomes
\begin{equation}
\label{newlimit}
\Delta U + 2\rho_0^{'} (y_1) \frac{\partial U}{\partial y_1} + 2i b \frac{\partial U}{\partial y_2} + \rho_0^2 U (1-|U|^2)=0 \ \mbox{in}\ \R^2_{+}, \ \frac{\partial U}{\partial y_1}=0 \ \mbox{on} \ \partial \R^2_{+}
\end{equation}
We claim that for $b$ small we can construct a new solution $U_b$ to (\ref{newlimit}) with two opposing vortices. As in \cite{lw-cpam} we take the initial  ansatz the same as before
$$ W= S_0 ( | x- d e_1|) S_0 (|x+d e_1|) e^{i \theta_{ d e_1} - i \theta_{-d e_1}}$$
The only new error in the equation comes from the interaction with the boundary layer $\rho_0$ which is the following
$$  \frac{\rho_0^{'}}{\rho_0} (\frac{ y_2 y_1 d}{ ( (y_1-d)^2+y_2^2) ( (y_1+d)^2 + y_2^2)}) $$
Note that $ \rho_0^{'} \sim e^{- C y_1}$, near the vortex $y\sim (d, 0)$ it is exponentially small. The $L^1$ norm of this error has the order $O(\frac{1}{d})$.  The rest of the perturbation arguments in \cite{lw-cpam} goes through. We omit the details.

\setcounter{equation}{0}
\section{Proofs of Theorem \ref{t3.1}}
\label{sec6}

In this section, we prove the key nondegeneracy result Theorem \ref{t3.1}. First it is easy to see that the following functions
\begin{equation}
 z_0= iU_c,  \ \ z_1=\frac{\partial U_c}{\partial y_2}
 \end{equation}
 satisfy the equation (\ref{ker10}) and the Neumann boundary condition $ \frac{ \partial \phi}{\partial y_1} (0, y_2)=0$. Hence they  belong to the kernel (\ref{ker1}). To prove the converse statement, we note that $z_2=\frac{\partial U_c}{\partial y_1}$  satisfies the equation (\ref{ker10}), however does not satisfy the Neumann boundary
condition. We now show that this function produces instead a nonzero eigenvalue.  To this end, we go back to the construction process.  The existence of a solution to (\ref{limit3.1}) for $c$ small is proved in the following steps. To align with the proofs in \cite{lw-cpam}, we use the notation $ c=\epsilon$ and assume that $ \epsilon>0$ is small. We first introduce some definitions from \cite{lw-cpam}.

Let $ d \in [\frac{1}{C_1} \frac{1}{ \epsilon },  \frac{C_1}{\epsilon }]$ where $C_1$ is a large constant. We choose the following ansatz
\begin{equation}
V_d (y)= S_0 ( |y- d \vec{e}_1|) S_0 ( |y+d \vec{e}_1|) e^{i \theta_{d \vec{e}_1} -i \theta_{-d \vec{e}_1}}
\end{equation}
where the function $ w^{+} (y)= S_0(|y|) e^{ i\theta}$ is the degree one vortex solution corresponding to the Ginzburg-Landau equation (\ref{limit3.11}).

Clearly be definition $V_d$ satisfies the Neumann boundary condition $ \frac{\partial V_d}{\partial y_1} (0, y_2)=0 $.  We look for solutions of (\ref{limit3.1}) in the form
\begin{equation}
\label{4.0}
 v(y)= \eta_d  ( V_d + i V_d \psi) + (1-\eta_d ) V_d e^{i \psi}
\end{equation}
where $\eta$ is a function such that
\begin{equation}
\label{eta}
 \eta_d = \tilde{\eta} (|z-d \vec{e}_1|)+ \tilde{\eta} (|z+d \vec{e}_1|)
\end{equation}
and $ \tilde{\eta} (s)= 1$ for $ s \leq 1$ and $ \tilde{\eta} (s)=0$ for $s \geq 2$.

We may write $\psi=\psi_1+ i \psi_2$ with $\psi_1, \psi_2$ real-valued. Set
\begin{equation}
v= V_d +\phi, \ \ \ \  \phi= \eta_d  i V_d \psi + (1-\eta_d ) V_d (e^{i \psi} -1).
\end{equation}

We solve (\ref{limit3.1}) in the following two steps:

\medskip

\noindent
{\bf Step 1:}  Fixing $ d \in [\frac{1}{ C_1}\frac{1}{ \epsilon }, \frac{C_1}{\epsilon }]$, we use the reduction method to find a pair $ (c_\epsilon (d), \phi_{\epsilon, d})$ such that
\begin{equation}
\label{Vd}
\left\{\begin{array}{l}
{\mathbb S}_0 [ V_d+\phi_{\epsilon, d}]:= \Delta ( V_d+ \phi_{\epsilon, d}) + i\epsilon \frac{\partial (V_d +\phi_{\epsilon, d})}{\partial y_2} + ( V_d+ \phi_{\epsilon, d}) (1- | V_d+ \phi_{\epsilon, d}|^2)=c_\epsilon \frac{\partial V_d}{\partial d}, \ \mbox{in} \ \R_{+}^2 = \{ y_1>0 \},
\\
\frac{\partial (V_d+\phi_{\epsilon, d})}{\partial y_1} (0, y_2)=0.
\end{array}
\right.
\end{equation}

\medskip

\noindent
{\bf Step 2:} We find a $d= d_\epsilon$ such that $ c_\epsilon (d)=0$.

The expansion of $ c_\epsilon  (d)$ is given by
\begin{equation}
\label{cd}
c_\epsilon (d) = c_1 \epsilon -\frac{c_2}{d} +{\mathcal  O} ( \epsilon^{1+\sigma}).
\end{equation}

\medskip

Let us denote $ u_d= V_d + \phi_{\epsilon, d}$. Similar to arguments in \cite{RW-aihp, W-prsa}, it can be shown that $ u_d$ is $C^1$ in $d$. We denote
$$ \omega= \frac{ \partial u_d}{\partial d} \Big|_{d=d_\epsilon} $$
Note that $\omega$ satisfies the Neumann boundary condition. Formally we differentiate the equation (\ref{Vd}) with respect to $d$ and we obtain
\begin{equation}
{\mathbb S}_0^{'} [ u_d] (\frac{\partial u_d}{\partial d})= \frac{\partial c_\epsilon}{\partial d}  \frac{\partial V_d}{\partial d} + c_\epsilon (d) \frac{\partial }{\partial d} (\frac{\partial V_d}{\partial d}).
\end{equation}
Now let $d=d_\epsilon$ and note that $ c_\epsilon (d_\epsilon)=0$. We see that $\omega$ satisfies
\begin{equation}
\label{ud}
{\mathbb S}_0^{'} [ u_d] (\omega)= \frac{\partial c_\epsilon}{\partial d}  \frac{\partial V_d}{\partial d}
\end{equation}

The equation (\ref{cd}) can be differentiated and it gives
\begin{equation}
\frac{\partial c_\epsilon}{\partial d}= \frac{c_2}{d^2} + {\mathcal O} (\epsilon^{2+\sigma}).
\end{equation}
This argument can be made rigorous, though tedious. We refer interested readers to  similar arguments in \cite{RW-aihp} or \cite{W-prsa}.

Now let $\phi$ be a bounded solution satisfying (\ref{ker1}). Multiplying the equation (\ref{ud}) by $\phi$ and the equation (\ref{ker1}) by $\omega$ we obtain
$$\frac{\partial c_\epsilon}{\partial d}  \int_{\R^2_{+}} \frac{\partial V_d}{\partial d}  \phi=0 $$
which implies that
\begin{equation}
\label{ddV}
\int_{\R^2_{+}} \frac{\partial V_d}{\partial d} \phi=0.
\end{equation}

We now decompose
$$ \phi= \beta_1 \omega + \tilde{\phi} $$
where the following orthogonality conditions are satisfied
 \begin{equation}
 \label{ortho}
 \int_{ | y- d_\epsilon |<1} \tilde{\phi} Z_0= \int_{ | y- d_\epsilon |<1} \tilde{\phi} \omega = \int_{ | y- d_\epsilon |<1} \tilde{\phi} Z_1=0.
\end{equation}

Note that $\tilde{\phi}$ still satisfies the Neumann boundary condition and also the following equation
\begin{equation}
\label{k1}
{\mathbb L}_0 [\tilde{\phi}]= \beta_1 \frac{\partial V_d}{\partial d}.
\end{equation}

We will show that $\beta_1=0$ and then $\tilde{\phi}=0$. From this the statement of Theorem \ref{t3.1} then follows.

 To this end, we first prove that for $|c|$ sufficiently small
\begin{equation}
\label{k2}
\|\tilde{\phi}\|_{L^\infty} \lesssim |\beta_1|.
\end{equation}
This can be proved by a blow-up argument. In fact, suppose this is not true. We find a sequence of solutions to (\ref{k1}), called $ \tilde{\phi}_n$ with $\|\tilde{\phi}_n \|_{L^\infty} \geq n  |\beta_1|, c_n \to 0$. We  divide both sides of (\ref{k1}) by $ \| \tilde{\phi}_n \|_{L^\infty}$ and let $ \hat{\phi}_n (y) = \frac{\tilde{\phi}_n (y+y_{c_n}) }{ \| \tilde{\phi}_n\|_{L^\infty}}$. Letting $n \to +\infty$, we see that the limit $\hat{\phi}_0=\lim_{n \to +\infty} \hat{\phi}_n$ is a bounded solution of the following equation
 $$ \Delta \phi +  (1- |w^{+}|^2)\phi -2 (w^{+} \cdot \phi) w^{+}  =0 \ \ \mbox{in} \ \R^2.$$
 By the nondegeneracy result (\cite{DFK}) we see that $ \hat{\phi}_0= \alpha_0 (i w^{+})+ \alpha_1 \frac{\partial w^{+}}{\partial y_1}+\alpha_2  \frac{\partial w^{+}}{\partial y_2}$ for some constants $\alpha_0, \alpha_1 $ and $\alpha_2$. Now the orthogonality condition  (\ref{ortho}) then  implies that $ \hat{\phi}_0 \equiv 0$.  This yields that $ \hat{\phi}_n \to  0 $ in $C^2_{loc}$.

Since $\omega \sim \frac{\partial V_d}{\partial d}$ we see from (\ref{ddV}) that
$$
\beta_1 \int_{\R^2_{+}} \frac{\partial V_d}{\partial d} \omega = - \int_{\R^2_{+}} \tilde{\phi}\omega = o(\beta_1)
$$
which implies that $ \beta_1 = o(\beta_1)$ and hence $ \beta_1=0$ and $ \tilde{\phi}=0$. So $\phi=0$. This reaches a contradiction.

\bigskip

\end{document}